\newcommand{\abs}[1]{\left|#1\right|}
\newcommand{\set}[1]{\left\{#1\right\}}
\newcommand{\bra}[1]{\left(#1\right)}
\newcommand{\Com}{\mathbb C}
\newcommand{\eps}{\varepsilon}
\newcommand{\B}{\mathcal{B}_\rho}
\newcommand{\Q}{\mathcal{Q}}
\newcommand{\A}{\mathcal{A}}
\newtheorem{theo}{Theorem}[section]
\newtheorem{lemm}[theo]{Lemma}
\newtheorem{defi}[theo]{Definition}
\begin{document}

\author{Jörn Peter}
\title[Hausdorff measure]{Hausdorff measure of escaping and Julia sets for bounded type functions of finite order}
\thanks{Supported by the Deutsche Forschungsgemeinschaft, Be 1508/7-1.}

\begin{abstract}
We show that the escaping sets and the Julia sets of bounded type transcendental entire functions of order $\rho$ become 'smaller' as $\rho\to\infty$. More precisely, their Hausdorff measures are infinite with respect to the gauge function $h_\gamma(t)=t^2g(1/t)^\gamma$, where $g$ is the inverse of a linearizer of some exponential map and $\gamma\geq(\log\rho(f)+K_1)/c$, but for $\rho$ large enough, there exists a function $f_\rho$ of bounded type with order $\rho$ such that the Hausdorff measures of the escaping set and the Julia set of $f_\rho$ with respect to $h_{\gamma'}$ are zero whenever $\gamma'\leq(\log\rho-K_2)/c$.
\end{abstract}

\maketitle

\section{Main results and outline}
\subsection{Introduction and main result}
Let $f$ be a transcendental entire function.
The \emph{Julia set} $\mathcal{J}(f)$ is the set of points in $\Com$ where the iterates
$f^n$ do not form a normal family with respect to the spherical metric on $\Com\cup\{\infty\}$, the \emph{escaping set} $I(f)$ is the set of all points $z$ such that $f^n(z)$ tends to infinity as $n\to\infty$.
Eremenko \cite{eremenko} showed that $\mathcal{J}(f)=\partial I(f)$.
Let $S(f)$ denote the set of \emph{singular values} of $f$, which is by definition
the smallest closed set $A$ such that $f:\Com\setminus f^{-1}(A)\to \Com\setminus A$ is a covering map. It can be easily
verified that the set of singular values is the closure of the set of critical and finite asymptotic values of $f$.
We say that $f$ is of \emph{bounded type} if $S(f)$ is bounded, and denote the set of all functions of bounded type by $\mathcal{B}$. In \cite{eremenkolyubich}, Eremenko and Lyubich introduced the \emph{logarithmic change of coordinates} which has become a standard tool to investigate properties of bounded type entire functions. Using this technique, they showed that $I(f)\subset\mathcal{J}(f)$, and hence $\overline{I(f)}=\mathcal{J}(f)$ by Eremenko's result, for every $f\in\mathcal{B}$.
The \emph{order} of an entire function $f$ is defined as
\begin{equation}\label{eqeq}
\rho(f):=\limsup_{r\to\infty}\frac{\log\log M(r,f)}{\log r}.
\end{equation}
Here, $M(r,f):=\max_{\abs{z}=r}\abs{f(z)}$.
If $\rho(f)<\infty$, then we say that $f$ is of \emph{finite order} $\rho(f)$.
From now on, we use the notation $\B:=\{f\in\mathcal{B}:f\text{ has finite order }\rho\}$. Note that if $f\in\mathcal{B}$ then $\rho(f)\geq 1/2$ (see for example \cite{aspenbergbergweiler} for an argument).

We examine the Hausdorff measure of escaping and Julia sets of functions $f\in\mathcal{B}$ of finite order with respect to certain gauge functions.
By a \emph{gauge function}, we mean an increasing function $h:[0,\eps)\to\mathbb{R}_{\geq 0}$ (where $\eps>0$) which is continuous from the right and satisfies $h(0)=0$. For an arbitrary set $A\subset\Com$, define
\begin{equation*}\label{b1}
\mathcal{H}^h(A):=\lim_{\delta\to 0}\inf\set{\sum_{i=1}^\infty h(\text{diam
}A_i):\bigcup_{i=1}^\infty A_i\supset A,\text{ diam
}A_i<\delta\text{ for every }i}
\end{equation*}
Then $\mathcal{H}^h$ is a metric outer measure on all subsets of $\Com$, called
the \emph{Hausdorff measure with respect to $h$}. Following \cite{rogers}, we introduce the notation $h_1\prec h_2$ for gauge functions $h_1$ and $h_2$ whenever the quotient $h_1(t)/h_2(t)$ tends to 0 as $t\to 0$.
In the special case where $h(t)=h^s(t):=t^s$ for some $s>0$, $\mathcal{H}^{h^s}$ is the
\emph{$s$-dimensional outer Hausdorff measure}. Given $A\subset\Com$, it is well known that there exists $s_0\geq0$ such that $\mathcal{H}^{h^s}(A)=\infty$
if $s<s_0$ and $\mathcal{H}^{h^s}(A)=0$ if $s>s_0$. This value $s_0$ is called the \emph{Hausdorff dimension} of the set $A$, which we will denote by HD$(A)$.
Bara\'{n}ski \cite{baranski} and (independently) Schubert \cite{schubert} showed that HD$(\mathcal{J}(f))=2$ whenever $f\in\B$. In fact, the stronger result HD$(I(f))=2$ also holds. However, if the order of $f$ is infinite, this need not be true anymore, as was shown by Stallard \cite{stallard}. In \cite{bks}, Bergweiler, Karpi\'{n}ska and Stallard proved that if the order of $f$ is infinite and $M(r,f)\leq\exp(\exp((\log r)^{q+\eps}))$ for large $r$, then HD$(\mathcal{J}(f))\geq 1+\frac{1}{q}$, and this estimate is sharp \cite{stallard}. This suggests that the escaping set and the Julia set of a function $f\in\B$ get 'smaller' as $\rho$ increases.
On the other hand, a result by Eremenko and Lyubich \cite[Proposition 4 and Theorem 7]{eremenkolyubich} implies that if $f$ has finite order and a logarithmic singularity, then $I(f)$ has zero two-dimensional Hausdorff measure; there are many functions satisfying this condition, so the usual $s$-dimensional Hausdorff measure is not suitable to distinguish sizes of escaping sets of bounded type entire functions with finite order, which is why we use more general gauge functions to measure them.
This question was addressed for the exponential functions $E_\lambda(z):=\lambda\exp(z)$ in \cite{peter}.
Let $\lambda\in(0,1/e)$ and $E_\lambda(z):=\lambda\exp(z)$ be the exponential map with parameter $\lambda$.
The function $E_\lambda$ has exactly one real repelling fixed point $\beta_\lambda$, that is, $E_\lambda(\beta_\lambda)=\beta_\lambda$ and $E_\lambda'(\beta_\lambda)>1$. A classical result
 due to K\oe nigs and Poincaré implies that there exists an entire function $L_\lambda$ satisfying $L_\lambda(0)=\beta_\lambda$, $L_\lambda'(0)=1$ and
\begin{equation*}
E_\lambda(L_\lambda(z))=L_\lambda(\beta_\lambda z)\text{ for all }z.
\end{equation*}
The proof of this theorem (and of some other results stated here without proof) can for example be found in
\cite{beardon}, \cite{bergweiler}, \cite{milnor} or \cite{steinmetz}.
Set $\Phi_\lambda:=(L_\lambda|_\mathbb{R})^{-1}$. It is easy to see that $\Phi_\lambda(x)$ tends
to $\infty$ as $x\to\infty$, but slower than any iterate of the logarithm.
Set $h_{\lambda,\gamma}(t):=t^2\Phi_\lambda(1/t)^\gamma$ for $\gamma>0$. The function $h_{\lambda,\gamma}$ is defined on a small interval $(0,\eps)$ and can be continued continuously to 0 by $h(0):=0$. It can be easily verified that $h_{\lambda,\gamma}$ is a gauge function. It was shown
in \cite{peter} that there exists $K_lambda$ such that
$\mathcal{H}^{h_{\lambda,\gamma}}(\mathcal{J}(E_\mu))=\mathcal{H}^{h_{\lambda,\gamma}}(I(E_\mu))=\infty$ for $\gamma>K_lambda$ and all $\mu$ while
$\mathcal{H}^{h_{\lambda,\gamma}}(\mathcal{J}(E_\mu))=\mathcal{H}^{h_{\lambda,\gamma}}(I(E_\mu))=0$ for $\gamma<K_lambda$ for $\mu$ such that $E_\mu$ has an attracting periodic point. Here
we obtain estimates of this type for all functions of finite
order in $\mathcal{B}$, with the exponent $\gamma$ depending on the order.

\begin{theo}\label{main1}
There exists $K_1>0$ with the following property: If $\rho\geq 1/2$ and $f\in\B$, then $\mathcal{H}^{h_{\lambda,\gamma}}(I(f))=\infty$ whenever $\gamma>(\log(\rho)+K_1)/\log\beta_\lambda$.
\end{theo}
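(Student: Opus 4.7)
The proof follows the Eremenko--Lyubich logarithmic change of variables, together with a Cantor-type subset of $I(f)$ built inside a single tract in the style of Bara\'nski and Schubert, estimated against the gauge $h_{\lambda,\gamma}$.

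\textbf{Logarithmic setup.} Choose $R_0>\sup_{z\in S(f)}|z|$; on each component $\tilde T$ of $\exp^{-1}(f^{-1}(\{|w|>R_0\}))$ the map $F:=\log\circ f\circ\exp$ is a conformal isomorphism onto the half-plane $H:=\{\mathrm{Re}\,w>\log R_0\}$. Two quantitative inputs drive the argument: the Eremenko--Lyubich expansion $|F'(w)|\gtrsim \mathrm{Re}\,F(w)/\mathrm{Re}\,w$, which makes $F$ strongly expanding once orbits climb high in $\tilde T$, and the finite-order hypothesis, which translates into $\log\mathrm{Re}\,F(w)\leq(\rho+o(1))\,\mathrm{Re}\,w$, bounding how quickly real parts of orbits can grow.

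\textbf{Cantor subset and measure.} Fix $\tilde T$ and a very large square $Q\subset H$ at height $R$. Inductively subdivide $Q$ into smaller squares and pull them back through iterated branches of $F^{-1}|_{\tilde T}$. By Koebe distortion one obtains, at each generation $n$, a family of $N_n$ nearly affine, pairwise disjoint cells of approximate diameter $d_n\asymp\prod_{k<n}|F'(z_k)|^{-1}$ along a representative orbit $(z_k)$, with $N_n$ controlled by the area of the available subdivision, which itself is determined by $\mathrm{Re}\,z_n$. The nested intersection $X\subset I(f)$ carries the natural self-similar measure $\mu$ giving equal mass to all cells of the same generation.

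\textbf{Gauge matching and threshold.} By the mass distribution principle it is enough to show $\mu(B(z,r))\leq C\,r^2\,\Phi_\lambda(1/r)^\gamma$ for all $z\in X$ and $r$ small, with $C$ depending only on the Eremenko--Lyubich and Koebe constants; the passage from positive to infinite $h_{\lambda,\gamma}$-measure of $I(f)$ is standard, using the density of iterated $f$-preimages of $X$ in $I(f)$ to produce disjoint rescaled copies. For $d_{n+1}<r\leq d_n$ the mass bound reduces to a product estimate involving $\prod_{k<n}|F'(z_k)|^2/N_k$. Iterating the finite-order bound shows that $R_n:=\mathrm{Re}\,z_n$ grows like a tower of exponentials of height $n$ with rate $\rho$, so in particular $\log\log(1/d_n)\leq n\log\rho+O(n)$. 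On the other hand, the K\oe nigs relation $\Phi_\lambda\circ E_\lambda=\beta_\lambda\,\Phi_\lambda$ converts each iterate of $\exp$ into a multiplicative factor of $\beta_\lambda$ in $\Phi_\lambda$, so $\Phi_\lambda(1/d_n)$ is comparable to $\beta_\lambda^{k(n)}$ with $k(n)$ proportional to $\log\log(1/d_n)/\log\beta_\lambda$. Equating the two growth rates yields $\Phi_\lambda(1/d_n)^\gamma\gtrsim\prod_{k<n}|F'(z_k)|^2/N_k$ precisely when $\gamma\log\beta_\lambda>\log\rho+K_1$, with $K_1$ absorbing the Eremenko--Lyubich constant, the $o(1)$ from the order estimate, and the Koebe distortion constants.

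\textbf{Main obstacle.} The principal difficulty is this comparison of two different notions of iterated escape: the $f$-iteration rate, governed by $\rho$ through the finite-order bound, and the linearised $E_\lambda$-iteration underlying $\Phi_\lambda$, governed by $\beta_\lambda$. One must pin down the exact multiplicative relationship between the two per step, \emph{uniformly} in $f\in\B$, so that the resulting constant $K_1$ is absolute and does not depend on $f$. A careful calibration of the subdivision scales and the heights $R_n$ against both the finite-order bound on $F$ and the K\oe nigs linearisation of $E_\lambda$ is also needed so that the mass bound holds at every point of $X$ rather than only along the representative orbit.
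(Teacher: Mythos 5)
Your skeleton (logarithmic transform, nested Cantor-type construction, a McMullen-style mass estimate, diameters calibrated to iterates of $E_\lambda$ so that $\Phi_\lambda(1/d_n)^\gamma\asymp\beta_\lambda^{n\gamma}$) matches the paper's. But the mechanism you propose for producing the term $\log\rho$ in the threshold is not the right one, and I do not see how to make it work. You locate the $\rho$-dependence in a ``growth-rate comparison'' between the $F$-orbit (rate governed by $\rho$) and the $E_\lambda$-orbit (rate governed by $\beta_\lambda$). This fails for two reasons. First, the asserted bound $\log\log(1/d_n)\leq n\log\rho+O(n)$ cannot hold: $1/d_n$ is an $n$-fold tower of exponentials, so $\log\log(1/d_n)$ is itself a tower of height about $n-2$, not linear in $n$. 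Second, and more fundamentally, $\Phi_\lambda$ is so slowly varying (one has $\Phi_\lambda(E_\lambda(x))=\beta_\lambda\Phi_\lambda(x)$ and $\Phi_\lambda(x^\rho)/\Phi_\lambda(x)\to1$ for fixed $\rho$) that replacing the escape rate $R_{n+1}=\lambda e^{R_n}$ by the fastest rate compatible with order $\rho$, namely $R_{n+1}=\exp(R_n^{\rho+\eps})$, changes $\Phi_\lambda(R_n)$ only by a factor $\beta_\lambda^{o(n)}$. The order is essentially invisible in this comparison and cannot produce an additive $\log\rho$ in the exponent; this is precisely why the paper fixes the escape scales $R_{n+1}\approx e^{\lambda R_n}$ independently of $\rho$.

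Where $\rho$ actually enters is in the per-generation multiplicity, i.e.\ in your ratio $(N_{k+1}/N_k)/|F'(z_k)|^2$. You assert that $N_n$ is ``controlled by the area of the available subdivision, which itself is determined by $\Re z_n$'' --- but it is not: it is determined by how much of a square at height $R_n$ is occupied by points of the union of the $2\pi i\mathbb{Z}$-translates of the logarithmic tract that $F$ maps above height $R_{n+1}$, and for a general $f\in\B$ this set could a priori be very sparse. The missing ingredient is a lower bound for this density in terms of $1/\rho$, uniform over the class. The paper obtains it from a theorem of Aspenberg and Bergweiler, $\log\log M(r,f)\geq\pi\int\frac{dt}{t\psi(t)}-C$ with $\psi$ the angular measure of the tract: the finite-order hypothesis bounds $\int ds/\tilde{\theta}(s)$ from above, and Cauchy--Schwarz converts this into $\int\tilde{\theta}(s)\,ds\geq\pi(x_2-x_1)^2/((\rho+\eps)x_2+C')$, hence $\mathrm{dens}(U_{R_{n+1}},Q^*)\geq1/(4\rho+\eps)$ and $\Delta_n\geq c_3/\rho$ per generation. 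Then $\Phi_\lambda(1/d_n)^\gamma\prod_{j\le n}\Delta_j\gtrsim(\beta_\lambda^\gamma c_3/\rho)^n\to\infty$ exactly when $\gamma>(\log\rho-\log c_3)/\log\beta_\lambda$. Without a substitute for this tract-width estimate your argument cannot close; the remaining points of your outline (distortion control, uniformity in $f$, and upgrading positive to infinite measure, which your one-line appeal to ``standard'' arguments also leaves to be checked) are comparatively routine.
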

Since $I(f)\subset\mathcal{J}(f)$ as mentioned in the introduction, this theorem immediately implies that $\mathcal{H}^{h_{\lambda,\gamma}}(\mathcal{J}(f))=\infty$ whenever $\gamma>(\log(\rho)+K_1)/\log\beta_\lambda$. The second result shows that for $\rho$ large enough, Theorem \ref{main1} is sharp (in the sense described below).
\begin{theo}\label{main2}
There exist $K_2,\rho_0>0$ with the following property: If $\rho\geq\rho_0$, there exists $f_\rho\in\B$ such that $\mathcal{H}^{h_{\lambda,\gamma}}(\mathcal{J}(f_\rho))=0$ for $\gamma<(\log(\rho)-K_2)/\log\beta_\lambda$.
\end{theo}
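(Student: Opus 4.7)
The plan is to prove the sharpness by exhibiting $f_\rho$ as a Bishop-model entire function with prescribed tract geometry, then covering $\mathcal{J}(f_\rho)$ in logarithmic coordinates by iterated preimages of a fixed compact set, and finally summing the $h_{\lambda,\gamma}$-values of these preimages. The threshold $\log\rho/\log\beta_\lambda$ should arise from matching the combinatorial entropy $\rho$ of the cover (coming from the tract count) against the intrinsic growth rate $\beta_\lambda$ of the gauge, which is encoded in the K\oe nigs identity $\Phi_\lambda\circ E_\lambda=\beta_\lambda\Phi_\lambda$.

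\emph{Construction.} I would take $f_\rho$ to have $\lfloor 2\rho\rfloor$ logarithmic singularities over $\infty$, the maximum allowed by the Denjoy--Carleman--Ahlfors theorem for order $\rho$, arranged symmetrically so that in logarithmic coordinates $F=\log\circ f_\rho\circ\exp$ its tracts $T_1,\dots,T_N$ ($N=\lfloor 2\rho\rfloor$) are contained in horizontal strips of imaginary width $\asymp 1/\rho$, each mapping univalently onto a right half-plane $H_R=\{\mathrm{Re}\,w>R\}$. Such a model can be realized as an entire function in $\mathcal{B}$ by Bishop's quasiconformal surgery; bounded type and $\rho(f_\rho)=\rho$ then follow from the tract geometry and the Phragm\'en--Lindel\"of principle.

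\emph{Cover and gauge estimate.} Working in logarithmic coordinates, at level $n$ the components of $F^{-n}(H_R)$ meeting a fixed bounded window are indexed by sequences $(j_0,\dots,j_{n-1})\in\{1,\dots,N\}^n$, so there are at most $C\rho^n$ of them. By Koebe distortion on each univalent branch of $F^{-n}$, together with the Eremenko--Lyubich bound $|F'(z)|\gtrsim \mathrm{Re}\, F(z)$ on tracts and the tract width $\asymp 1/\rho$, each component in the $z$-plane lies in a disk of diameter $d$ with $\Phi_\lambda(1/d)\asymp\beta_\lambda^n$ (iterating the K\oe nigs identity). The total $h_{\lambda,\gamma}$-content of the level-$n$ cover is then essentially
\begin{equation*}
\rho^n\cdot A_n\cdot \beta_\lambda^{n\gamma},
\end{equation*}
where $A_n$ denotes the area of a typical cell. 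A careful inductive area-contraction estimate, driven by the narrowness $1/\rho$ of the tracts, should produce geometric decay with ratio $\asymp \beta_\lambda^\gamma/\rho$, which is $<1$ precisely when $\gamma\log\beta_\lambda<\log\rho-K_2$ for an absolute constant $K_2$. Since $\mathcal{J}(f_\rho)=\overline{I(f_\rho)}$ lies in the closure of the union of the level-$n$ cells (the boundaries contribute negligibly to the gauge), this yields $\mathcal{H}^{h_{\lambda,\gamma}}(\mathcal{J}(f_\rho))=0$ in the stated range.

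\emph{Main obstacle.} The two delicate points are (i) producing $f_\rho\in\B$ of order exactly $\rho$ with uniform tract width $\asymp 1/\rho$, where Bishop's surgery must be arranged so that both bounded type and the order estimate survive, and (ii) the quantitative area-contraction estimate matching the combinatorial branching $\rho^n$ against the gauge growth $\beta_\lambda^{n\gamma}$ uniformly over all tracts and levels, rather than only asymptotically. Once these are in place, Koebe distortion and the linearization identity reduce the computation to a geometric series whose convergence threshold is precisely $(\log\rho-K_2)/\log\beta_\lambda$, paralleling the exponential-family analysis of \cite{peter}.
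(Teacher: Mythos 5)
Your overall strategy -- cover $\mathcal{J}$ in logarithmic coordinates by a nested hierarchy of cells, show that the surviving fraction per generation is $\asymp 1/\rho$ while the gauge contributes a factor $\beta_\lambda^\gamma$ per generation via the K\oe nigs identity, and sum the resulting geometric series -- is exactly the mechanism of the paper's proof. But your construction of $f_\rho$ defeats this mechanism. You propose $\lfloor 2\rho\rfloor$ tracts, each of logarithmic width $\asymp 1/\rho$; since a tract of asymptotic width $w$ forces growth $\log\Re F(x+iy)\gtrsim \pi x/w$ along it, order exactly $\rho$ forces each tract to have width at least about $\pi/\rho$, so $\lfloor 2\rho\rfloor$ of them occupy essentially the \emph{whole} period strip. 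The per-generation survival density is then bounded below by a constant (indeed this extremal Denjoy--Carleman--Ahlfors configuration is precisely the one used by Aspenberg and Bergweiler to produce Julia sets of \emph{positive area}, for which $\mathcal{H}^{h_{\lambda,\gamma}}=\infty$ for every $\gamma>0$ since $t^2\prec h_{\lambda,\gamma}$). Your ratio $\beta_\lambda^\gamma/\rho$ would degenerate to $C\beta_\lambda^\gamma$, and the threshold would lose its $\log\rho$ dependence entirely. What is needed is the opposite extreme: \emph{minimal} total tract measure. The paper takes $f_\rho$ to be (a rescaled) Mittag-Leffler function $\sum z^n/\Gamma(n/\rho+1)$, which equals $\rho\exp(z^\rho)+O(1/|z|)$ in a single sector of opening $\pi/\rho$ and is bounded elsewhere; this yields one tract of angular density $1/(2\rho)$, so that $\mathcal{J}(F)$ avoids a set $S_\rho$ of density $1-1/(2\rho)$ in every period, and the factor $2\rho$ in the threshold is exactly this density. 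No quasiconformal surgery is needed, and the asymptotics $|F'(z)|\sim\rho\exp(\rho\Re z)$ come for free from the classical expansion.

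A second, independent gap is the bookkeeping "$\rho^n$ cells of typical area $A_n$". The components of $F^{-n}(H_R)$ meeting a fixed window have diameters differing by towers of exponentials of height up to $n$, and since $h_{\lambda,\gamma}(t)/t^2=\Phi_\lambda(1/t)^\gamma$ blows up as $t\to0$, you cannot evaluate the gauge at a "typical" diameter: the small cells are penalized disproportionately. The paper resolves this by fixing a recursively defined sequence of scales $r_{n+1}=\bigl(\rho M\exp(\rho cK/(Mr_n))\bigr)^{-1}$, assigning to each Julia point a square at the scale $s_{m_z(n)}(z)\in[Mr_{n+1},Mr_n]$ adapted to its own orbit, and using the Besicovitch covering theorem to extract a cover of bounded multiplicity $N_0$ at each stage; only then does one count mesh squares of the uniform size $r_{n+1}$ and iterate. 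Without some device of this kind your "careful inductive area-contraction estimate" cannot be made to close, because the induction must run over a single sequence of scales common to all points. Your identification of the threshold and your use of Koebe distortion plus the identity $\Phi_\lambda\circ E_\lambda=\beta_\lambda\Phi_\lambda$ are correct in spirit, but both the model function and the multi-scale covering need to be replaced as above.
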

Again, it follows immediately that the statement is still valid if we substitute $\mathcal{J}(f)$ by $I(f)$.

Summarizing, these results can be interpreted as follows: If $f$ does not grow faster than $\exp(\abs{z}^{\rho+\eps})$ for every $\eps$, then the Hausdorff measure of $f$ is infinite with respect to the function $h_{\lambda,\gamma(\rho,\lambda)}$, and $\gamma(\rho,\lambda)$ necessarily has to increase with $\rho$ (if we keep $\lambda$ fixed). This means that, the higher the order of $f$ is, the 'smaller' are $I(f)$ and $\mathcal{J}(f)$.

We also mention here that similar, but sharper results were proved for the exponential family in \cite{peter}. Here, we use techniques very different from those in \cite{peter} - for the proof of Theorem \ref{main1}, this seems to be clear, since the class of functions under consideration is much more general here. But also the methods to prove Theorem \ref{main2} are very different from those that we applied to show the corresponding result for the exponential family. Although the function $f_\rho$ that we construct in the proof of Theorem \ref{main2} morally behaves like $\exp(z^\rho)$, it has zeros and critical points arbitrarily close to the boundary of the tract $W$ of $f_\rho$. This makes it impossible to find a fixed radius $r$ such that for every $z\in\log W$, the logarithmic transform $F_\rho$ of $f_\rho$ can be continued analytically to a disk of radius $r$ around $z$. Hence the distortion of $F_\rho$ near the boundary of a logarithmic tract is not small, unlike as in the exponential family. This fact gives the need to use different methods than in \cite{peter} also for the proof of Theorem \ref{main2}, although the main idea remains the same. (For the definitions of 'tract', 'logarithmic transform' and 'distortion', see section 2.2.1).

This paper is organized as follows:
In section 2, we provide some notations that we will use throughout this work and we briefly review some
classical results from function theory as well as holomorphic dynamics, like Koebe's
distortion theorem  or the logarithmic transform of a function in class $\mathcal{B}$.
Finally, we mention some results for the functions $h_{\lambda,\gamma}$ that will be used in the proofs of the two main theorems.
Sections 3 and 4 contain the proofs of Theorems \ref{main1} and \ref{main2}, respectively,
together with the necessary preparations.
I thank Walter Bergweiler and Helena Mihaljevi\'{c}-Brandt for many fruitful discussions about this project.

\section{Notations and preliminaries}

\subsection{Notations}

For $z \in \Com$, let $\Re z$ and $\Im
z$ denote the real and imaginary parts of $z$, respectively. If $z_0 \in \Com$
and $r>0$, we write $D(z_0,r)$ for the disk in $\Com$ with center
$z_0$ and radius $r$ with respect to the euclidean metric. By $\mathbb{D}:=D(0,1)$ we denote the open unit disk in $\Com$, and by $\mathbb{H}:=\{z\in\Com:\Re z>0\}$ the right half plane.
For $r>0$ and $\theta\in\mathbb{R}$, let \[Q(0,r,0):=\set{z\in\Com:\max\set{\abs{\Re z},\abs{\Im z}}<\frac{r}{2}}\]
and \[Q(z_0,r,\theta)=z_0+e^{i\theta}Q(0,r,0).\]
If the angle $\theta$ is not important, we will suppress it and just write
$Q(z_0,r)$ in order to increase readability.

We denote the two-dimensional Lebesgue measure of a measurable set $A\subset\Com$ by $\abs{A}$ and the one-dimensional Lebesgue measure of a measurable set $A\subset\mathbb{R}$ by $\mathcal{L}(A)$.
If $A,B \subset \Com$ and $0<\abs{B}<\infty$, we write dens($A,B$) for the
density of $A$ in $B$, which is defined by
\[\text{dens}(A,B):=\frac{\abs{A \cap B}}{\abs{B}}.\]
Let $f$ be an entire function and $S(f)$ be the set of singular values of $f$, that is, $S(f)$ is the closure of
the set of critical and finite asymptotic values of $f$.
We denote the \emph{postsingular set} of $f$ by $P(f)$, which is by definition
\[P(f):=\overline{\bigcup_{n\in\mathbb{N}_0}f^n(S(f))}.\]
Recall from the introduction that $\mathcal{B}:=\{f\text{ transcendental, entire}:S(f)\text{ bounded}\}$ and $\B:=\{f\in\mathcal{B}: f\text{ has finite order }\rho\},$ where the order $\rho(f)$ is defined by \eqref{eqeq}.
In the proofs, $\eps_1,\eps_2,\ldots$ refer to positive real numbers which can be arbitrarily small if other quantities have been chosen suitably. As an example, we write 'Let $R$ be large. Then $M(r,f)\leq\exp(r^{\rho(f)+\eps_1})$ for $r>R$' without emphasizing that $R$ in fact depends on $\eps_1$.

\subsection{Preliminaries}
The results of this section will mainly be stated without proofs; we will give references where needed.

\subsubsection{The logarithmic transform}

This is a standard tool in complex dynamics ever since it was introduced by Eremenko and Lyubich
\cite{eremenkolyubich}. Let $f\in\mathcal{B}$ and assume that
$S(f)\subset\mathbb{D}$ and $f(0)\in\mathbb{D}$ (this can always be achieved by
conjugating $f$ with a conformal automorphism of $\Com$). Then the number of components of $f^{-1}(\Com\setminus\nolinebreak\overline{\mathbb{D}})$, also called \emph{tracts} of $f$, is finite by the Denjoy-Carleman-Ahlfors theorem (see for example \cite{nevanlinna}). Let $W_1(f),\ldots,W_n(f)$ be the tracts of $f$. Eremenko and Lyubich \cite{eremenkolyubich} showed that every $W_i(f)$ is simply connected and bounded by an analytic curve that tends to $\infty$ on both ends, and $f\circ\exp:T_i^k(f)\to\Com\setminus\overline{\mathbb{D}}$ is a universal covering for every component $T_i^k(f)$ of $\log W_i(f)$. Clearly, the $T_i^k(f)$ can be related to each other by fixing one component $T_i^0(f)$ and setting $$T_i^k(f):=T_i^0(f)+2\pi ik.$$ Such a component $T_i^k(f)$ is called \emph{logarithmic tract} of $f$. The map $\exp:\mathbb{H}\to\Com\setminus\overline{\mathbb{D}}$ is also a universal covering. So, with $$\mathcal{T}(f):=\bigcup_{i,k}T_i^k(f),$$ there exists a map $F:\mathcal{T}(f)\to\mathbb{H}$ such that $$F_{T_i^k(f)}:=F|_{T_i^k(f)}$$ is a conformal isomorphism for every $i$ and every $k$.
We say that $F$ is the \emph{logarithmic transform} of $f$ and abbreviate
\begin{equation}\label{G}
G_{T_i^k(f)}:=\left(F_{T_i^k(f)}\right)^{-1}.
\end{equation}
The function $F$ has an expanding property that follows from the Koebe one-quarter theorem and the fact that every logarithmic tract does not contain disks of radius bigger than $\pi$:
\begin{lemm}[\cite{eremenkolyubich}, Lemma 1]
For every $z\in\mathbb{H}$ and every $T\in\mathcal{T}(f)$, we have
\begin{equation}\label{a1}
\abs{(F_T^{-1})'(z)}\leq\frac{4\pi}{\Re z}.
\end{equation}
\end{lemm}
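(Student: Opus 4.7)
The plan is to combine the Koebe one-quarter theorem with the observation that each logarithmic tract $T$ cannot contain a euclidean disk of radius larger than $\pi$. Since $F_T:T\to\mathbb{H}$ is by hypothesis a conformal isomorphism, its inverse $G_T=F_T^{-1}:\mathbb{H}\to T$ is univalent, so Koebe-type estimates will apply directly.

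First I would fix $z\in\mathbb{H}$ and note that $D(z,\Re z)\subset\mathbb{H}$, since points in this disk have positive real part. Renormalising via the affine map $w\mapsto z+w\,\Re z$, I consider
\[
\phi:\mathbb{D}\to T,\qquad \phi(w):=G_T\bra{z+w\,\Re z},
\]
which is univalent with $\phi(0)=G_T(z)$ and $\phi'(0)=G_T'(z)\cdot\Re z$. Koebe's one-quarter theorem then implies
\[
\phi(\mathbb{D})\supset D\!\bra{G_T(z),\,\tfrac{1}{4}\abs{G_T'(z)}\,\Re z}.
\]
Since $\phi(\mathbb{D})\subset T$, the tract $T$ therefore contains a disk of the above radius.

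Next I need the fact that $T$ contains no euclidean disk of radius larger than $\pi$. The components of $\log W_i(f)$ are pairwise disjoint and, by construction, form the family $\{T_i^0(f)+2\pi ik:k\in\mathbb{Z}\}$. If $T$ contained a disk $D(z_0,r)$ with $r>\pi$, then $T+2\pi i$ would contain $D(z_0+2\pi i,r)$, and the vertical distance between the two centres is $2\pi<2r$, so the two disks would intersect. This contradicts the disjointness of the distinct components $T$ and $T+2\pi i$ of $\log W_i(f)$.

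Combining the two facts gives
\[
\tfrac{1}{4}\abs{G_T'(z)}\,\Re z\;\leq\;\pi,
\]
which rearranges to the desired bound $\abs{(F_T^{-1})'(z)}=\abs{G_T'(z)}\leq 4\pi/\Re z$. There is no real obstacle here: the only step requiring care is the geometric argument that the $2\pi i$-periodic family of components forces the $\pi$-bound on inscribed disks, but this is immediate once one remembers that the logarithmic tracts of a given $W_i(f)$ are vertical translates of a single component.
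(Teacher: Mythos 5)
Your proof is correct and follows exactly the route the paper indicates (Koebe one-quarter theorem applied to $G_T$ on $D(z,\Re z)\subset\mathbb{H}$, combined with the fact that a logarithmic tract is disjoint from its $2\pi i$-translate and hence contains no disk of radius exceeding $\pi$). This is the same argument as in Eremenko--Lyubich, to which the paper defers.
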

In this paper, we will rather prove the results for $F$ than for $f$, so we have to define the Julia and escaping sets of $F$. Denote by $$\mathcal{J}(F):=
\{z\in\Com:z\in\mathcal{T}(f),\exp(z)\in\mathcal{J}(f)\}$$ the \emph{Julia set of $F$} and
by $$I(F):=\{z\in\Com:\Re F^n(z)\to\infty\text{ as }n\to\infty\}$$ the \emph{escaping set of $F$}.
Obviously, $\exp(\mathcal{J}(F))\subset\mathcal{J}(f)$ and $\exp(I(F))\subset I(f)$.
Further, $\exp(\mathcal{J}(F))=\mathcal{J}(f)$ if $\mathbb{D}\cap\mathcal{J}(f)=\emptyset$. This will be important in the proof of Theorem \ref{main2}.

\subsubsection{Koebe's theorem and distortion}
We begin this section by stating the classical Koebe theorem.
\begin{theo} \label{kk}
Let $z_0 \in \Com$, $r>0$, $f:D(z_0,r) \to \Com$ be a univalent
function and $z \in D(z_0,r)$. Then
\begin{equation*}
\frac{r^2\abs{f'(z_0)}(r-\abs{z-z_0})}{(r+\abs{z-z_0})^3}\leq\abs{f'(z)}\leq
\frac{r^2\abs{f'(z_0)}(r+\abs{z-z_0})}{(r-\abs{z-z_0})^3}
\end{equation*}
and
\begin{equation*}
\frac{r^2\abs{f'(z_0)}\abs{z-z_0}}{(r+\abs{z-z_0})^2}\leq\abs{f(z)-f(z_0)}\leq
\frac{r^2\abs{f'(z_0)}\abs{z-z_0}}{(r-\abs{z-z_0})^2}.
\end{equation*}
\end{theo}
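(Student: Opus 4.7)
The plan is to reduce to the classical distortion theorem for the Koebe class $\mathcal{S}$ of functions univalent on $\mathbb{D}$ normalized by $g(0)=0$, $g'(0)=1$, and then change variables.

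First I would set
\[
g(w) := \frac{f(z_0 + rw) - f(z_0)}{r\, f'(z_0)}, \qquad w \in \mathbb{D},
\]
which is univalent on $\mathbb{D}$ with $g(0)=0$ and $g'(0)=1$, hence $g \in \mathcal{S}$. Observe that $|g'(w)| = |f'(z_0+rw)|/|f'(z_0)|$ and $|g(w)| = |f(z_0+rw)-f(z_0)|/(r|f'(z_0)|)$; so once the inequalities
\[
\frac{1-|w|}{(1+|w|)^3} \leq |g'(w)| \leq \frac{1+|w|}{(1-|w|)^3}, \qquad \frac{|w|}{(1+|w|)^2} \leq |g(w)| \leq \frac{|w|}{(1-|w|)^2}
\]
are established for every $g \in \mathcal{S}$, the four inequalities in the theorem follow by substituting $w = (z-z_0)/r$ and multiplying out powers of $r$.

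The core step is therefore the classical distortion bounds for $\mathcal{S}$, which I would derive in the usual two-stage way. Stage one is the area theorem: if $h(\zeta) = \zeta + \sum_{n\geq 0} b_n \zeta^{-n}$ is univalent on $\{|\zeta|>1\}$, then computing the area of the complement of $h(\{|\zeta|>R\})$ via Green's theorem and letting $R\downarrow 1$ gives $\sum n|b_n|^2 \leq 1$. Applying this to the square-root transform $\sqrt{g(\zeta^{-2})^{-1}}\cdot \zeta$ of a function $g(z) = z + a_2 z^2 + \cdots \in \mathcal{S}$ yields Bieberbach's bound $|a_2|\leq 2$. Stage two is the Koebe transform trick: for fixed $z_0 \in \mathbb{D}$, the function
\[
G(\zeta) := \frac{1}{(1-|z_0|^2)f'(z_0)}\left( g\!\left(\frac{\zeta+z_0}{1+\overline{z_0}\zeta}\right) - g(z_0)\right)
\]
again lies in $\mathcal{S}$, and applying the bound $|a_2(G)|\leq 2$ gives
\[
\left| (1-|z_0|^2)\,\frac{g''(z_0)}{g'(z_0)} - 2\overline{z_0} \right| \leq 4.
\]
Integrating this estimate for $\log g'$ radially from $0$ to $w$ produces the derivative bounds, and a further integration from $0$ to $w$ of $g'$ (combined with those bounds) yields the bounds on $|g(w)|$.

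The only genuine obstacle is the Bieberbach coefficient inequality, because everything else is integration; but since this is the classical Koebe theorem, the proof is completely standard and one could alternatively invoke it as a black box from any textbook in geometric function theory. After stage two, the change of variable $z = z_0 + rw$ is purely algebraic and finishes the argument.
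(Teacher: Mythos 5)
Your reduction and your outline are correct, and they coincide with the standard argument: the paper states Theorem \ref{kk} as ``the classical Koebe theorem'' without proof, so there is no in-paper argument to compare against, and the textbook route you describe (area theorem $\Rightarrow$ $|a_2|\leq 2$ $\Rightarrow$ Koebe transform $\Rightarrow$ the differential inequality for $\log g'$ $\Rightarrow$ integration) is exactly what the implicit reference points to. The normalization $g(w)=(f(z_0+rw)-f(z_0))/(r f'(z_0))$ and the substitution $w=(z-z_0)/r$ do convert the four inequalities for the class $\mathcal{S}$ into the four stated inequalities, as one checks by clearing powers of $r$.

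One small point worth tightening: the lower growth bound $|g(w)|\geq |w|/(1+|w|)^2$ is not obtained by ``a further integration from $0$ to $w$'' along the radius; integrating $|g'|$ along the radial segment only gives the \emph{upper} bound. For the lower bound you either dispose of the case $|g(w)|\geq 1/4$ directly (since $|w|/(1+|w|)^2\leq 1/4$ always) or, when $|g(w)|<1/4$, integrate $|g'|$ along the preimage under $g$ of the straight segment $[0,g(w)]$, which lies in $g(\mathbb{D})$ by the Koebe one-quarter theorem, and then apply the lower derivative bound along that curve. This is a standard wrinkle and does not affect the validity of your approach.
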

Koebe's theorem implies in particular that the family of all univalent functions $f:\mathbb{D}\to\Com$
with $f(0)=0$ and $f'(0)=1$ is normal. This yields the following result.
\begin{theo} \label{epsdelta}
For every $\eps>0$, there exists $\delta>0$ such that if $f:\mathbb{D}\to\Com$ is univalent
with $f(0)=0$ and $f'(0)=1$, then
\[\abs{\frac{f(z)}{z}-1}<\eps\]
whenever $\abs{z}<\delta$.
\end{theo}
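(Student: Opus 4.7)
The plan is to argue by contradiction, leveraging precisely the normality statement highlighted immediately before the theorem. Suppose the conclusion fails. Then there exist $\eps_0>0$, univalent functions $f_n:\mathbb{D}\to\Com$ with $f_n(0)=0$ and $f_n'(0)=1$, and points $z_n\in\mathbb{D}\setminus\{0\}$ with $z_n\to 0$, such that $\abs{f_n(z_n)/z_n-1}\geq\eps_0$ for every $n$.

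The first step is to repackage each $f_n$ as a function holomorphic on all of $\mathbb{D}$ by setting $g_n(z):=f_n(z)/z$ for $z\neq 0$ and $g_n(0):=f_n'(0)=1$; since $f_n(0)=0$, this removes the singularity. By the normality assertion following Koebe's theorem, a subsequence $f_{n_k}$ converges locally uniformly on $\mathbb{D}$ to a holomorphic limit $f$, and by Weierstrass we have $f(0)=0$ and $f'(0)=1$. A short maximum-principle argument (on $\{\abs{z}=r\}\subset\mathbb{D}$ one has $\abs{g_{n_k}-g}\leq r^{-1}\sup_{\abs{w}=r}\abs{f_{n_k}(w)-f(w)}$, and this bound extends to $\overline{D(0,r)}$ since $g_{n_k}-g$ is holomorphic) shows that $g_{n_k}\to g$ locally uniformly on $\mathbb{D}$, where $g(z):=f(z)/z$ with $g(0)=1$.

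The final step uses only the continuity of $g$ at the origin. Choose $\delta_0>0$ with $\abs{g(z)-1}<\eps_0/2$ for $\abs{z}\leq\delta_0$, and then fix $k$ so large that $\abs{z_{n_k}}\leq\delta_0$ and also $\sup_{\abs{z}\leq\delta_0}\abs{g_{n_k}(z)-g(z)}<\eps_0/2$. Then $\abs{g_{n_k}(z_{n_k})-1}<\eps_0$, contradicting the choice of the sequences. There is no substantive obstacle here: the only real input is the normality of the class, which in turn rests on the upper bound in Koebe's theorem (Theorem~\ref{kk}) giving uniform boundedness of $f'$ on compact subsets of $\mathbb{D}$, and the remainder is a standard compactness-plus-continuity wrap-up. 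The only point meriting a moment's care is the transfer of locally uniform convergence from $f_n$ to $g_n$ up to and including the origin, handled by the maximum-principle estimate above.
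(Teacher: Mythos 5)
Your argument is correct and is exactly the route the paper indicates: the paper gives no written proof, merely noting that the normality of the class of normalized univalent functions (which follows from Theorem~\ref{kk}) ``yields'' the result, and your contradiction-plus-compactness argument, including the maximum-principle step transferring locally uniform convergence from $f_n$ to $f_n(z)/z$ near the origin, is a sound fleshing-out of that remark.
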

Let $U\subset\Com$ either be open and bounded or the closure of such a set.
A function $f:U\to\Com$ is said to have \emph{bounded distortion} if $f$ is a bilipschitz mapping, that is,
\[0<c_f:=\inf_{\substack{z,w \in U\\z\neq w}}\frac{\abs{f(z)-f(w)}}{\abs{z-w}}\leq
\sup_{\substack{z,w \in U\\z\neq
w}}\frac{\abs{f(z)-f(w)}}{\abs{z-w}}=:C_f<\infty.\]
The distortion of $f$ is then defined as $D(f):=C_f/c_f$.
It can be easily shown that if $U$ is open and $f$ has bounded distortion, then $f$ extends to a function on $\overline{U}$ with the same distortion as $f$. Conversely, if $U$ is the closure of an open bounded set, then $f|_{\text{int}(U)}$ has the same distortion as $f$.
Note that the distortion of a holomorphic function $f:U\to\Com$ is often defined by $$L(f):=\frac{\sup_{z\in U}\abs{f'(z)}}{\inf_{z\in U}\abs{f'(z)}}.$$ It is very easy to see that $L(f)\leq D(f)$ for every function $f$, but in
general, we do not have equality. If both $U$ and $f(U)$ are convex, it can be shown that $D(f)=L(f)$. But even if $U$ is convex and $f$ is univalent on $U$, $L(f)$ may be finite but $f$ does not have bounded distortion. An example is given by $f(z)=z^4$, defined on all $z=x+iy$ such that $4(x-1)^2+y^2<1$. In fact, $1+i,1-i\in\partial U$ and $(1+i)^4=(1-i)^4$, so $f$ does not have bounded distortion. On the other hand, $\sup_{z\in U}\abs{z}=\sqrt{2}$ and $\inf_{z\in U}\abs{z}=1/2$, so $L(f)<16\sqrt{2}$. However, using the Koebe theorems, it is easy to see that $D(f)$ is bounded whenever $f:U\to\Com$ can be continued univalently to a domain which compactly contains $U$.
\begin{lemm} \label{first}
Let $z_0 \in \Com,r>0,K>1$. Let $f:D(z_0,Kr)\to\Com$ be a
univalent function. Then
\[L(f|_{D(z_0,r)})\leq\left(\frac{K+1}{K-1}\right)^4\] and
\[D\left(f|_{D(z_0,r)}\right)\leq L(f)\left(\frac{K+1}{K-1}\right)^2\leq\left(\frac{K+1}{K-1}\right)^6.\]
\end{lemm}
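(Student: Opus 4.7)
For the first inequality, the plan is to apply the derivative bound in Theorem~\ref{kk} directly to $f$ on $D(z_0,Kr)$. For $z \in D(z_0,r)$ one has $\abs{z-z_0} < r$, and substituting $Kr$ for the radius in the two-sided estimate gives
\[\frac{K^2(K-1)}{(K+1)^3}\abs{f'(z_0)} \leq \abs{f'(z)} \leq \frac{K^2(K+1)}{(K-1)^3}\abs{f'(z_0)},\]
so the ratio of the upper envelope over the lower envelope equals $((K+1)/(K-1))^4$.

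For the second inequality, one direction is almost automatic: since $D(z_0,r)$ is convex, integrating $f'$ along the segment from $w$ to $z$ gives $\abs{f(z)-f(w)} \leq \sup_{D(z_0,r)}\abs{f'}\cdot\abs{z-w}$, hence $C_f \leq \sup\abs{f'}$. The content of the lemma is the matching lower bound $c_f \geq ((K-1)/(K+1))^2\inf_{D(z_0,r)}\abs{f'}$, and I plan to derive this by a M\"obius renormalisation. Rescaling so that the ambient disk becomes $\mathbb{D}$, set $\tilde f(\zeta) := f(z_0 + Kr\zeta)$; the preimages $\tilde w := (w-z_0)/(Kr)$ and $\tilde z := (z-z_0)/(Kr)$ lie in $D(0,1/K)$. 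I would push $\tilde w$ to the origin via the disk automorphism $\phi(\zeta) := (\zeta + \tilde w)/(1 + \overline{\tilde w}\zeta)$, which satisfies $\phi'(0) = 1-\abs{\tilde w}^2$, and then consider
\[G(\zeta) := \frac{\tilde f(\phi(\zeta))-\tilde f(\tilde w)}{\tilde f'(\tilde w)(1-\abs{\tilde w}^2)},\]
which is univalent on $\mathbb{D}$ with $G(0)=0$ and $G'(0)=1$. The normalised lower Koebe bound $\abs{G(\zeta)} \geq \abs{\zeta}/(1+\abs{\zeta})^2$, applied at $\zeta := \phi^{-1}(\tilde z) = (\tilde z - \tilde w)/(1 - \overline{\tilde w}\tilde z)$, then transports back via $f(z)-f(w) = \tilde f'(\tilde w)(1-\abs{\tilde w}^2)G(\zeta)$ to a lower bound on $\abs{f(z) - f(w)}$; the prefactor $Kr$ from $\tilde f'(\tilde w) = Kr\,f'(w)$ cancels against $\abs{\tilde z - \tilde w} = \abs{z-w}/(Kr)$.

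The algebra should close cleanly using only the crude estimates $\abs{\tilde z}, \abs{\tilde w} \leq 1/K$: one obtains $1 - \abs{\tilde w}^2 \geq (K^2-1)/K^2$, $\abs{1 - \overline{\tilde w}\tilde z} \geq (K^2-1)/K^2$, and $\abs{1 - \overline{\tilde w}\tilde z} + \abs{\tilde z - \tilde w} \leq (K+1)^2/K^2$, so the various powers of $K$ telescope to exactly $((K-1)/(K+1))^2\abs{f'(w)}\abs{z-w}$. Combining with the convexity upper bound yields $D(f|_{D(z_0,r)}) \leq L(f)\cdot((K+1)/(K-1))^2$, and chaining with the first inequality then gives the final $((K+1)/(K-1))^6$ bound.

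The conceptual obstacle, and the reason for the M\"obius detour, is that the naive attempt to apply Theorem~\ref{kk} centred at $w$ on the maximal subdisk $D(w, Kr-\abs{w-z_0}) \subset D(z_0,Kr)$ fails when $K$ is close to $1$: this disk is only guaranteed to have radius bigger than $(K-1)r$, whereas $\abs{z-w}$ can reach almost $2r$ when $z,w$ are near opposite ends of a diameter, so Koebe's hypothesis that the argument lie inside the univalence disk is violated whenever $K \leq 3$. The renormalisation repairs this by always working on the full disk $\mathbb{D}$ after pushing $\tilde w$ to the origin, producing the estimate uniformly in $K > 1$.
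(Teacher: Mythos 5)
Your argument is correct and follows essentially the same route as the paper's: the first bound comes from applying Theorem~\ref{kk} directly on $D(z_0,Kr)$, the upper Lipschitz bound from convexity, and the lower Lipschitz bound from composing with a M\"obius map of the unit disk onto the ambient disk sending $0$ to one of the two points and then invoking the normalised Koebe growth estimate, with the same crude bounds $1-\abs{\tilde w}^2\geq (K^2-1)/K^2$ etc.\ closing the algebra to $((K-1)/(K+1))^2$. The only cosmetic difference is that the paper centres its M\"obius map at $z$ rather than $w$ and skips the intermediate rescaling step.
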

\begin{proof}
Without loss of generality, we may assume that $z_0=0$ and $r=1$.
By Theorem \ref{kk}, we have
$$\abs{f'(z)}\leq K^2\abs{f'(z_0)}\frac{K+\abs{z-z_0}}{(K-\abs{z-z_0})^3}\leq K^2\abs{f'(z_0)}\frac{(K+1)}{(K-1)^3}$$
and analogously $$\abs{f'(z)}\geq\abs{f'(z_0)}\frac{K^2(K-1)}{(K+1)^3}$$ for all $z\in\mathbb{D}$. It follows that $L(f)\leq\left(\frac{K+1}{K-1}\right)^4$.
To prove the second statement, note that $$\frac{\abs{f(z)-f(w)}}{\abs{z-w}}\leq\sup_{y\in \mathbb{D}}\abs{f'(y)}$$ for all $z,w\in \mathbb{D}$. To prove an inequality in the other direction, let $z,w\in \mathbb{D}$, and define
$$\varphi:\mathbb{D}\to D(0,K),u\mapsto\frac{z-Ku}{1-\frac{u\overline{z}}{K}}.$$
Then $\varphi$ is biholomorphic and $\varphi(0)=z$. Let $v:=\varphi^{-1}(w)$. We have
\begin{align*}
\abs{f(w)-f(z)}&=\abs{(f\circ\varphi)(v)-(f\circ\varphi)(0)}\geq\abs{(f\circ\varphi)'(0)}\frac{\abs{v}}{(1-\abs{v})^2}\\
&=\abs{f'(z)}\abs{\varphi'(0)}\frac{\abs{v}}{(1-\abs{v})^2}\geq\inf_{y\in \mathbb{D}}\abs{f'(y)}\frac{1}{K}(K^2-\abs{z}^2)\frac{\abs{v}}{(1-\abs{v})^2}\\
&\geq\inf_{y\in \mathbb{D}}\abs{f'(y)}\frac{K^2-1}{K}\frac{\abs{v}}{(1-\abs{v})^2}\\
&\geq\inf_{y\in \mathbb{D}}\abs{f'(y)}\frac{K^2-1}{K}\frac{K\abs{w-z}}{\abs{\overline{z}w-K^2}}\frac{1}{(1+\frac{K\abs{w-z}}{\abs{\overline{z}w-K^2}})^2}\\
&\geq\inf_{y\in \mathbb{D}}\abs{f'(y)}(K^2-1)\abs{w-z}\frac{\abs{\overline{z}w-K^2}}{(\abs{\overline{z}w-K^2}+K\abs{w-z})^2}\\
&\geq\inf_{y\in \mathbb{D}}\abs{f'(y)}(K^2-1)\abs{w-z}\frac{K^2-1}{(K^2+1+2K)^2}\\
&\geq\inf_{y\in \mathbb{D}}\abs{f'(y)}\abs{w-z}\frac{(K-1)^2}{(K+1)^2}
\end{align*}
by Theorem \ref{kk}, so $D(f)\leq L(f)(\frac{K+1}{K-1})^2$.
\end{proof}
Some simple properties of holomorphic functions with bounded distortion are summarized in the following lemma.
\begin{lemm} \label{eigensch}
Let $U \subset \Com$ be bounded and open. Let $f:U \to \Com$ be a univalent
function with bounded distortion. Then the following statements
hold:
\begin{enumerate}
\item $D(f)=D(f^{-1})$, $L(f)=L(f^{-1})$ \\
\item If $V\supset f(U)$ is a domain and $g:V\to\Com$ is
univalent with bounded distortion, then $D(g\circ f)\leq
D(g)D(f)$ and $L(g\circ f)\leq L(g)L(f)$. \\
\item If $A\subset U$ is Lebesgue-measurable, then $f(A)$ is
Lebesgue-measurable and dens$(A,U)\leq L(f)^2$dens$(f(A),f(U))\leq D(f)^2dens(f(A),f(U)).$
\end{enumerate}
\end{lemm}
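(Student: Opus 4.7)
The plan is to handle each of the three items independently, working directly from the definitions together with standard facts about univalent holomorphic maps. Parts (a) and (b) are short manipulations of the quantities $c_f$, $C_f$, $\sup\abs{f'}$, $\inf\abs{f'}$, while (c) requires the change of variables formula for holomorphic maps and a small measurability argument.

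For (a), the substitution $\zeta=f(z)$, $\omega=f(w)$ turns the defining ratio for $f^{-1}$ into the reciprocal of the corresponding ratio for $f$, so $C_{f^{-1}}=1/c_f$ and $c_{f^{-1}}=1/C_f$; dividing gives $D(f^{-1})=D(f)$. The identity $L(f^{-1})=L(f)$ then follows at once from $(f^{-1})'(w)=1/f'(z)$ whenever $w=f(z)$, which swaps the roles of $\sup$ and $\inf$ of $\abs{f'}$ when passing to $f^{-1}$.

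For (b), I would write the difference quotient for $g\circ f$ as the product
\[
\frac{\abs{g(f(z))-g(f(w))}}{\abs{z-w}} = \frac{\abs{g(f(z))-g(f(w))}}{\abs{f(z)-f(w)}}\cdot\frac{\abs{f(z)-f(w)}}{\abs{z-w}},
\]
which is legitimate because $f$ is injective on $U$. Passing to sup and inf gives $C_{g\circ f}\leq C_g C_f$ and $c_{g\circ f}\geq c_g c_f$, so $D(g\circ f)\leq D(g)D(f)$. The estimate $L(g\circ f)\leq L(g)L(f)$ is the chain rule $(g\circ f)'(z)=g'(f(z))f'(z)$ combined with taking sup and inf.

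For (c), bounded distortion means $f$ is Lipschitz with constant $C_f$, so it sends Lebesgue null sets to null sets; combined with the continuity of $f$, this yields Lebesgue measurability of $f(A)$ via the usual inner/outer approximation of $A$ by $F_\sigma$ and $G_\delta$ sets. For the density inequality I would use the change of variables formula for univalent holomorphic maps, which gives $\abs{f(A)}=\int_A\abs{f'}^2\,dm$ and the analogous expression for $\abs{f(U)}$ (the Jacobian being $\abs{f'}^2$ is a direct consequence of the Cauchy-Riemann equations). Bounding the integrand from above by $\sup_U\abs{f'}^2$ and from below by $\inf_U\abs{f'}^2$ and then dividing yields
\[
\text{dens}(A,U)\leq L(f)^2\,\text{dens}(f(A),f(U)),
\]
and $L(f)\leq D(f)$ closes the chain. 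The only slightly subtle point is the measurability step, which is standard for bilipschitz maps, so I do not expect any genuine obstacle beyond routine bookkeeping.
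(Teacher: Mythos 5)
Your proof is correct; the paper states this lemma without proof (it is among the "simple properties" recorded in Section 2.2), and your argument — reciprocal difference quotients for (a), factoring the difference quotient and the chain rule for (b), and the change of variables formula $\abs{f(A)}=\int_A\abs{f'}^2\,dm$ together with the Lipschitz null-set argument for (c) — is exactly the standard verification the paper implicitly relies on.
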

If we apply a function $f$ with small distortion to a square, then its image is a set which is almost square-shaped. More precisely, we have the following lemma, which is an application of Theorem \ref{epsdelta}.
\begin{lemm}\label{second}
For every $1/\sqrt{2}>\eps>0$, there exists a constant $K>1$ which satisfies
the following property: Let $z_0 \in
\Com$, $r>0$, $K'\geq K$, a univalent function
$\tilde{f}:D\left(z_0,K'r/\sqrt{2}\right) \to \Com$ and a
square $Q=Q(z_0,r,\theta)$ be given. Let
$f:=\tilde{f}|_{\overline{Q}}$ and $d:=D(f)$ be the distortion of
$f$. Then
\[Q\left(f(z_0),\abs{f'(z_0)}r\frac{1}{d}\bra{1-\sqrt{2}\eps},\theta+\arg
f'(z_0)\right)\subset f(Q)\] and \[f(Q)\subset
Q\left(f(z_0),\abs{f'(z_0)}rd\bra{1+\sqrt{2}\eps},\theta+\arg
f'(z_0)\right).\]
\end{lemm}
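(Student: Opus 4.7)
The plan is to normalize $\tilde f$ to a map fixing $0$ with derivative $1$, rescale the domain so that Theorem \ref{epsdelta} applies on $\mathbb{D}$, and then read off both inclusions from a single pointwise bound on $|g(z)-z|$.

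First, by translating and rotating the $z$-plane, I may assume $z_0=0$ and $\theta=0$. Put
\[
g(z):=\frac{\tilde f(z)-\tilde f(0)}{\tilde f'(0)},
\]
which is univalent on $D(0,K'r/\sqrt 2)$ with $g(0)=0$, $g'(0)=1$, and $D(g|_{\overline Q})=D(f)=d$, since distortion is invariant under affine maps. Because $f(z)=f(0)+f'(0)\,g(z)$ and multiplication by $f'(0)$ is scaling by $|f'(0)|$ composed with rotation by $\arg f'(0)$, the two stated inclusions are equivalent to
\[
Q\bigl(0,r(1-\sqrt 2\varepsilon)/d,0\bigr)\subset g(Q)\subset Q\bigl(0,rd(1+\sqrt 2\varepsilon),0\bigr).
\]

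The key quantitative step is the rescaling. Define $h(\zeta):=g(K'r\zeta/\sqrt 2)\cdot\sqrt 2/(K'r)$, which is univalent on $\mathbb D$ with $h(0)=0$ and $h'(0)=1$. For $\varepsilon\in(0,1/\sqrt 2)$, Theorem \ref{epsdelta} supplies $\delta>0$ with $|h(\zeta)-\zeta|<\varepsilon|\zeta|$ whenever $|\zeta|<\delta$; I set $K:=1/\delta$. Then for any $K'\geq K$ and any $z\in\overline Q\subset\overline{D(0,r/\sqrt 2)}$, the rescaled point $\zeta:=z\sqrt 2/(K'r)$ has $|\zeta|\leq 1/K'\leq\delta$, and undoing the rescaling gives the pointwise estimate
\[
|g(z)-z|<\varepsilon|z|\leq\varepsilon r/\sqrt 2\qquad(z\in\overline Q).
\]

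Both inclusions now follow. For the outer one, any $z\in\overline Q$ satisfies $|\Re g(z)|\leq|\Re z|+|g(z)-z|<r/2+\varepsilon r/\sqrt 2=r(1+\sqrt 2\varepsilon)/2$, and similarly for $|\Im g(z)|$; since $d\geq 1$ this places $g(z)$ in the target square. For the inner one, $g$ is a homeomorphism from $\overline Q$ onto $g(\overline Q)$, so $\partial g(Q)=g(\partial Q)$; if $z\in\partial Q$ has, for instance, $\Re z=r/2$, then
\[
|\Re g(z)|\geq \Re z-|g(z)-z|>r(1-\sqrt 2\varepsilon)/2\geq r(1-\sqrt 2\varepsilon)/(2d),
\]
so $g(\partial Q)$ is disjoint from the open square $Q(0,r(1-\sqrt 2\varepsilon)/d,0)$. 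Since this square is connected and contains $0=g(0)\in g(Q)$, it must lie entirely inside $g(Q)$.

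The only step I expect to need care is the Jordan-curve argument for the inner inclusion, but univalence of $g$ on a neighbourhood of $\overline Q$ makes $\partial g(Q)=g(\partial Q)$ automatic. Everything else is bookkeeping; the distortion constant $d$ plays no substantive role beyond the inequalities $d\geq 1$ and $1/d\leq 1$, which are precisely what allow one to weaken the sharper estimate without $d$ into the form stated.
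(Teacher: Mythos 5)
Your proof is correct and is exactly the ``application of Theorem \ref{epsdelta}'' that the paper invokes for this lemma without writing out the details: normalize and rescale to the unit disk, get $\abs{g(z)-z}<\eps\abs{z}\leq\eps r/\sqrt{2}$ on $\overline{Q}$, and read off the outer inclusion directly and the inner one via $g(\partial Q)=\partial g(Q)$ and connectedness, with $d\geq 1$ only serving to weaken the $d$-free estimate into the stated form. The sole cosmetic point is that with $K=1/\delta$ the corners of $\overline{Q}$ give $\abs{\zeta}=\delta$ rather than $\abs{\zeta}<\delta$; taking $K=2/\delta$, or extending the estimate to $\abs{\zeta}\leq\delta$ with a non-strict inequality by continuity, disposes of this.
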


\subsection{Two results for $h_{\lambda,\gamma}$}

Recall from the introduction that for $\lambda\in(0,1/e)$, the function $E_\lambda(z):=\lambda\exp(z)$ has a unique real repelling fixed point $\beta_\lambda$, and there is a function $\Phi_\lambda$ which satisfies
\begin{equation}\label{99}
\Phi_\lambda(E_\lambda(x))=\beta_\lambda \Phi_\lambda(x)
\end{equation}
for all $x\geq\beta_\lambda$. We consider the gauge function $$h_{\lambda,\gamma}(t):=t^2\Phi_\lambda(1/t)^\gamma.$$
The first result about $h_{\lambda,\gamma}$ that we mention is that the measure $\mathcal{H}^{h_{\lambda,\gamma}}$ essentially only depends on $\beta_\lambda^\gamma$ (see \cite{peter}).
\begin{theo}\label{unabh}
Let $\lambda_1,\lambda_2\in(0,1/e)$. If $\gamma_1,\gamma_2$ are chosen such that $\beta_{\lambda_1}^{\gamma_1}=\beta_{\lambda_2}^{\gamma_2}$, then there exist constants $c,C>0$ with $$ch_{\lambda_2,\gamma_2}(t)\leq h_{\lambda_1,\gamma_1}(t)\leq Ch_{\lambda_2,\gamma_2}(t)$$ if $t$ is small enough.
\end{theo}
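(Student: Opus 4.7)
The plan is to use the functional equation \eqref{99} to reduce the estimate to a comparison of iteration counts under $E_{\lambda_1}$ and $E_{\lambda_2}$. Fix base points $x_0^{(i)}>\beta_{\lambda_i}$ and, for large $x$, set
\[
n_i(x):=\max\{n\geq 0:E_{\lambda_i}^n(x_0^{(i)})\leq x\},
\]
so that $x$ lies in $[E_{\lambda_i}^{n_i(x)}(x_0^{(i)}),E_{\lambda_i}^{n_i(x)+1}(x_0^{(i)}))$. Iterating \eqref{99} yields $\Phi_{\lambda_i}(E_{\lambda_i}^n(x_0^{(i)}))=\beta_{\lambda_i}^n\Phi_{\lambda_i}(x_0^{(i)})$, and since $\Phi_{\lambda_i}$ is increasing on $[\beta_{\lambda_i},\infty)$, monotonicity gives
\[
\beta_{\lambda_i}^{n_i(x)}\Phi_{\lambda_i}(x_0^{(i)})\;\leq\;\Phi_{\lambda_i}(x)\;<\;\beta_{\lambda_i}^{n_i(x)+1}\Phi_{\lambda_i}(x_0^{(i)}).
\]
Raising to the $\gamma_i$-th power and writing $B:=\beta_{\lambda_1}^{\gamma_1}=\beta_{\lambda_2}^{\gamma_2}$, I get $\Phi_{\lambda_i}(x)^{\gamma_i}\asymp B^{n_i(x)}$ with multiplicative constants depending only on $\lambda_i$, $\gamma_i$ and $x_0^{(i)}$. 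After the substitution $x=1/t$ the common factor $t^2$ cancels from the quotient $h_{\lambda_1,\gamma_1}(t)/h_{\lambda_2,\gamma_2}(t)$, so the whole theorem reduces to showing that $|n_1(x)-n_2(x)|$ stays bounded as $x\to\infty$.

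To control $|n_1(x)-n_2(x)|$, I would use the identity $E_{\lambda_i}^{-1}(z)=\log z-\log\lambda_i$, so the two inverse branches differ only by the additive constant $\log(\lambda_1/\lambda_2)$. Setting $\alpha_n:=E_{\lambda_1}^{-n}(x)$, $\beta_n:=E_{\lambda_2}^{-n}(x)$ and $d_n:=\beta_n-\alpha_n$, a direct computation gives
\[
d_{n+1}=\log\!\left(1+\frac{d_n}{\alpha_n}\right)+\log(\lambda_1/\lambda_2),\qquad d_0=0.
\]
Because each sequence descends monotonically from approximately $\log x$ toward the attracting fixed point $\beta_{\lambda_i}$ of $E_{\lambda_i}^{-1}$ (whose multiplier $1/\beta_{\lambda_i}$ is strictly less than $1$), a short induction shows that $|d_n|\leq M$ uniformly in $n$ and in $x$. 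Equivalently, both $E_{\lambda_1}^{-n}(x)$ and $E_{\lambda_2}^{-n}(x)$ agree with the pure iterated logarithm $\log^{(n)}x$ up to a uniformly bounded additive error, so the last $n$ at which these preimages lie above the respective thresholds $x_0^{(i)}$ differs by at most a constant $K$ depending only on $\lambda_1,\lambda_2,x_0^{(1)},x_0^{(2)}$.

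Combining the two steps yields $B^{-K}\lesssim\Phi_{\lambda_1}(x)^{\gamma_1}/\Phi_{\lambda_2}(x)^{\gamma_2}\lesssim B^{K}$, which after the substitution $x=1/t$ is precisely the required two-sided estimate for sufficiently small $t$. The only real obstacle is the uniform boundedness of $(d_n)$: one must control the correction $\log(1+d_n/\alpha_n)$ not only in the "large $\alpha_n$" regime, where it is tiny, but also throughout the transition region where $\alpha_n$ approaches $\beta_{\lambda_1}$ and the correction becomes most sensitive to the accumulated size of $d_n$. The contracting character of $E_{\lambda_i}^{-1}$ at its attracting fixed point is what makes this induction close; once this is verified, the remainder of the argument is straightforward bookkeeping of the constants.
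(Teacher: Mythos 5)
Your reduction is the right one, and since the paper itself gives no proof of this theorem (it simply defers to the reference for the exponential family), the only thing to check is whether your argument closes; it does. The step you flag as "the only real obstacle" — uniform boundedness of $d_n$ — is in fact a one-line induction, and you should write it out: since $\lambda_i<1/e$ forces $\beta_{\lambda_i}>\log(1/\lambda_i)>1$, and the backward orbits satisfy $\alpha_n\geq\beta_{\lambda_1}$, $\beta_n\geq\beta_{\lambda_2}$ for all $n$, we have $\min(\alpha_n,\beta_n)\geq T:=\min(\beta_{\lambda_1},\beta_{\lambda_2})>1$; combining this with the elementary inequality $\abs{\log(y'/y)}\leq\abs{y'-y}/\min(y,y')$ gives
\[
\abs{d_{n+1}}\leq\frac{\abs{d_n}}{T}+\abs{\log(\lambda_1/\lambda_2)},
\]
so $\abs{d_n}\leq M:=\frac{T}{T-1}\abs{\log(\lambda_1/\lambda_2)}$ follows by induction from $d_0=0$. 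Note that the mechanism is not really the local multiplier at the fixed point but the global fact that $y\mapsto\log(y/\lambda_i)$ is a contraction with constant $1/T<1$ on the whole ray $[T,\infty)$ where both orbits live; this is exactly what prevents the per-step perturbation $\log(\lambda_1/\lambda_2)$ from accumulating beyond a bounded total, including in the transition region you worry about. Two further details should be made explicit. First, you need $x_0^{(i)}>\beta_{\lambda_i}$ so that $\Phi_{\lambda_i}(x_0^{(i)})>0$; otherwise the two-sided comparison $\Phi_{\lambda_i}(x)^{\gamma_i}\asymp B^{n_i(x)}$ degenerates, since $\Phi_{\lambda_i}(\beta_{\lambda_i})=0$. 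Second, passing from $\sup_n\abs{d_n}\leq M$ to $\abs{n_1(x)-n_2(x)}\leq K$ requires the observation that once $\alpha_n$ drops below $x_0^{(1)}$, the corresponding $\beta_n\leq x_0^{(1)}+M$ needs only a bounded number of further applications of $\log(\cdot/\lambda_2)$ to fall below $x_0^{(2)}$ (and symmetrically), uniformly in $x$, because $x_0^{(2)}>\beta_{\lambda_2}$. With these points written out the proof is complete and correct.
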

Next, we show that zero and infinite $\mathcal{H}^{h_{\lambda,\gamma}}$-measure are preserved under bilipschitz mappings.
\begin{lemm}\label{preserved}
Let $A\subset\Com$ and $f$ be a bilipschitz mapping. If $\mathcal{H}^{h_{\lambda,\gamma}}(A)=0$ $($resp. $\infty)$, then $\mathcal{H}^{h_{\lambda,\gamma}}(f(A))=0$ $($resp. $\infty)$.
\end{lemm}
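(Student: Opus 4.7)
The plan is to reduce the lemma to an approximate doubling property of the gauge $h_{\lambda,\gamma}$ under rescaling of the argument, after which the standard covering argument for Hausdorff measures of bilipschitz images applies verbatim.

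The key auxiliary estimate I would prove is that for every $L>0$ there exists $K=K(L,\lambda,\gamma)>0$ with
\[
h_{\lambda,\gamma}(Lt)\leq K\cdot h_{\lambda,\gamma}(t)
\]
for all sufficiently small $t>0$. Since $h_{\lambda,\gamma}(Lt)/h_{\lambda,\gamma}(t)=L^{2}\bigl(\Phi_\lambda(1/(Lt))/\Phi_\lambda(1/t)\bigr)^\gamma$, this reduces to bounding $\Phi_\lambda(Mx)/\Phi_\lambda(x)$ as $x\to\infty$ for arbitrary $M>0$; monotonicity of $\Phi_\lambda$ handles $M\leq 1$ trivially, so only $M>1$ matters. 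Here I would exploit the linearization equation \eqref{99}: writing it as $\Phi_\lambda(y)=\beta_\lambda\,\Phi_\lambda(g(y))$ with $g(y):=\log(y/\lambda)$, iteration gives
\[
\Phi_\lambda(Mx)/\Phi_\lambda(x)=\Phi_\lambda(g^n(Mx))/\Phi_\lambda(g^n(x))
\]
as long as $g^n(Mx)$ stays in the domain of $\Phi_\lambda$. The identity $g(Mx)=g(x)+\log M$ converts the multiplicative perturbation into an additive one of size $\log M$, and further iterations of $g$ (which is a contraction around its attracting fixed point $\beta_\lambda$) quickly shrink this additive error to zero. For any sufficiently large $x$ one can therefore choose $n=n(x)$ as the first time $g^{n}(x)$ enters a fixed compact fundamental subinterval $J\subset(\beta_\lambda,\infty)$; both $g^{n(x)}(x)$ and $g^{n(x)}(Mx)$ then lie in a common bounded subinterval of $(\beta_\lambda,\infty)$ depending only on $\lambda$ and $M$, so the above ratio is bounded by $\max\Phi_\lambda/\min\Phi_\lambda$ over that set.

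With the doubling property in hand, the remainder is routine. Let $C$ be an upper Lipschitz constant of $f$. For any cover $A\subset\bigcup_{i}A_{i}$ with $\mathrm{diam}\,A_{i}<\delta$ small, the family $\{f(A_{i})\}$ covers $f(A)$, satisfies $\mathrm{diam}\,f(A_{i})\leq C\,\mathrm{diam}\,A_{i}<C\delta$, and the doubling inequality yields
\[
\sum_{i}h_{\lambda,\gamma}(\mathrm{diam}\,f(A_{i}))\leq\sum_{i}h_{\lambda,\gamma}(C\,\mathrm{diam}\,A_{i})\leq K\sum_{i}h_{\lambda,\gamma}(\mathrm{diam}\,A_{i}).
\]
Taking the infimum over such covers and letting $\delta\to 0$ gives $\mathcal{H}^{h_{\lambda,\gamma}}(f(A))\leq K\cdot\mathcal{H}^{h_{\lambda,\gamma}}(A)$. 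Applying the same argument to the bilipschitz map $f^{-1}$ produces the reverse inequality, so the two measures are comparable up to multiplicative constants; in particular, they vanish or are infinite simultaneously, as claimed.

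The main obstacle is the slow-variation estimate for $\Phi_\lambda$: controlling how the iteration count $n(x)$ behaves under a multiplicative perturbation of $x$ is the only place where the specific form of $\Phi_\lambda$ (as the inverse of a K\oe nigs linearizer) enters the argument. Once that bookkeeping is carried out, everything else is a direct transcription of the classical bilipschitz-invariance proof for power gauges.
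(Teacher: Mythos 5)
Your argument is correct and is essentially the paper's proof: establish an approximate doubling property $h_{\lambda,\gamma}(Lt)\leq K(L)\,h_{\lambda,\gamma}(t)$ and then run the standard covering transfer for $f$ and $f^{-1}$. The only difference is that the case you treat as the main obstacle ($M>1$, i.e.\ scaling factor $L<1$) is actually trivial — since $h_{\lambda,\gamma}$ is an increasing gauge function, $h_{\lambda,\gamma}(Lt)\leq h_{\lambda,\gamma}(t)$ for $L\leq 1$ — so the slow-variation analysis of $\Phi_\lambda$ via the linearization equation, while valid, is unnecessary; the paper needs only $L>1$, which is exactly the case monotonicity of $\Phi_\lambda$ disposes of.
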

\begin{proof}
First note that for every $K>0$, there exists $K'(K)>0$ such that $h_{\lambda,\gamma}(Kt)\leq K'(K)h_{\lambda,\gamma}(t)$.
In fact, for $K\leq 1$, we can choose $K'(K)=K$. If $K>1$, we have with $K'(K)=K^2$ that
$$h_{\lambda,\gamma}(Kt)=K^2t^2\Phi_\lambda(1/(Kt))^\gamma\leq K^2t^2\Phi_\lambda(1/t)^\gamma=K'(K)h_{\lambda,\gamma}(t).$$
Suppose that $c<\abs{f(x)-f(y)}/\abs{x-y}<C$ for all $x,y$.
Let $\mathcal{H}^{h_{\lambda,\gamma}}(A)=0$ and $\{A_i\}$ be a covering for $A$. Then $\{f(A_i)\}$ is a covering for $f(A)$ and $$\sum h_{\lambda,\gamma}(\text{diam }f(A_i))\leq\sum h_{\lambda,\gamma}(C\cdot\text{diam }A_i)\leq K'(C)\sum h_{\lambda,\gamma}(\text{diam }A_i).$$
If $\mathcal{H}^{h_{\lambda,\gamma}}(A)=\infty$, let $\{B_i\}$ be a covering for $f(A)$. Then $\{f^{-1}(B_i)\}$ is a covering for $A$ and we have
$$\sum h_{\lambda,\gamma}(\text{diam }B_i)\geq\sum h_{\lambda,\gamma}(c\cdot\text{diam }f^{-1}(B_i))\geq\frac{1}{K(1/c)}\sum h_{\lambda,\gamma}(\text{diam }f^{-1}(B_i)).$$ Since for diam($f(U)$) is bounded above and below by a multiple of diam$(U)$ which is independent of $U\subset A$, the lemma is proved.
\end{proof}

\section{The estimate from below}

\subsection{Preparations}
Since the fundamental work of McMullen \cite{mcmullen}, there is a standard method
for estimating the Hausdorff measure of Julia sets of transcendental entire functions.
We will only give a very brief introduction here.
\begin{defi}[nesting conditions]\label{nest}
For $n \in \mathbb{N}$, let $\A_n$ be a finite collection
of compact, disjoint and connected subsets of $\Com$ with positive
Lebesgue-measure. Let $A_n$ be the union of the elements of $\A_n$. The intersection $$A:=\bigcap_{n=0}^\infty A_n$$ is a non-empty and compact set. We say that the sequence $(\A_n)$ satisfies the \emph{nesting conditions} if it has the following three properties:
\begin{enumerate}
\item For every $n \in \mathbb{N}$ and every $B \in \A_n$, there exists some $B' \in
\A_{n-1}$ such that $B \subset B'$.
\item There exists a decreasing sequence $(d_n)$
converging to 0 with \[\max_{B\in\A_n}\text{diam}(B)\leq d_n\] for all $n\in\mathbb{N}$.
\item There exists a sequence $(\Delta_n)$ of positive real numbers with \[\text{dens}(A_{n+1},B)\geq\Delta_n\] for all $n\geq 0,B\in\A_n$.
\end{enumerate}
\end{defi}
The key lemma to the proof of Theorem \ref{main1} is
\begin{lemm} \label{infinity}
Let $\set{\A_n}$ be a collection of families of sets
which satisfies the nesting conditions $($with properly chosen
sequences $(d_n)$ and $(\Delta_n))$. Let $A$ be defined as above.
Let $\eps>0$ and $g:(0,\eps)\to\mathbb{R}_{\geq 0}$ be a
decreasing continuous function such that $t^2g(t)$ is increasing.
Further, suppose that
$\lim_{t\to 0}t^2g(t)=0$ and
\begin{equation}\label{69}
\lim_{n\to\infty} g(d_n)\prod_{j=1}^n\Delta_j=\infty.
\end{equation}
Define
\[h:[0,\eps)\to\mathbb{R},t\mapsto
\begin{cases}
t^2g(t)&,t>0\\
0&,t=0
\end{cases}\]
Then $\mathcal{H}^h(A)=\infty$.
\end{lemm}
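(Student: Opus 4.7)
The plan is a mass-distribution argument in the spirit of McMullen, adapted to the gauge $h(t) = t^2 g(t)$. From the nesting conditions I first derive two preparatory facts: iterating condition (c) gives $|A_N| \geq |A_0| \prod_{j=0}^{N-1} \Delta_j$ (by summing $|A_{n+1} \cap B| \geq \Delta_n |B|$ over the disjoint $B \in \A_n$), and the nested-compact intersection principle along any descending chain $B_0 \supset B_1 \supset \cdots$ with $B_n \in \A_n$ shows that every $B \in \A_n$ meets $A$, so by (b) every point of $A_n$ lies within distance $d_n$ of $A$.

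Given $L > 0$, I would use hypothesis \eqref{69} to pick $N$ so large that $g(d_N) |A_0| \prod_{j=0}^{N-1} \Delta_j \geq C_0 L$, where $C_0$ is the planar isodiametric constant, and take $\delta < d_N$. For any cover $\{U_i\}$ of $A$ with $t_i := \text{diam}\, U_i < \delta$, I assign to each $i$ its scale $n_i \geq N$ defined by $d_{n_i+1} \leq t_i < d_{n_i}$, and thicken $U_i$ to $V_i := \{z : \text{dist}(z,U_i) \leq d_{n_i+1}\}$, so that $\text{diam}\, V_i \leq 3 t_i$ and $|V_i| \leq 9 C_0 t_i^2$. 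With $M := \max_i (n_i + 1)$, the proximity fact yields $A_M \subset \bigcup_i V_i$, whence $|A_M| \leq 9 C_0 \sum_i t_i^2$. Since $g$ is decreasing and $t_i \leq d_N$,
\begin{equation*}
\sum_i h(t_i) \;\geq\; g(d_N) \sum_i t_i^2 \;\geq\; \frac{g(d_N)}{9 C_0} |A_M| \;\geq\; \frac{g(d_N) |A_0|}{9 C_0} \prod_{j=0}^{M-1} \Delta_j.
\end{equation*}

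The main obstacle is closing this bound uniformly in $M \geq N$, since $\prod_{j=0}^{M-1} \Delta_j$ shrinks with $M$ while $g(d_N)$ is fixed. This is where the monotonicity of $t^2 g(t)$ enters: it provides the alternative bound $h(t_i) \geq h(d_{n_i+1})$, and a scale-by-scale splitting of $\{U_i\}$ according to the values $n_i$ upgrades the leading factor from $g(d_N)$ to $g(d_M)$ at the cost of only an absolute constant. Applying hypothesis \eqref{69} at the index $M$ (rather than at $N$) then delivers $\sum_i h(t_i) \geq L$, as required.
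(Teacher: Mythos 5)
Your preparatory steps are fine (the iterated density bound $\abs{A_N}\geq\abs{A_0}\prod_{j=0}^{N-1}\Delta_j$, the fact that every $B\in\A_n$ meets $A$, the thickening and the isodiametric bound), but the proof does not close, and the step you defer to at the end is precisely the one that cannot be carried out as described. After splitting the cover by scales $I_n:=\{i:n_i=n\}$ and writing $W_n:=\bigcup_{i\in I_n}V_i$, the only information your bookkeeping retains is $\sum_n\abs{W_n}\geq\abs{A_M}$ together with the per-set bounds $h(t_i)\geq g(d_{n_i})t_i^2\geq g(d_{n_i})\abs{V_i}/(9C_0)$ and $h(t_i)\geq h(d_{n_i+1})$. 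Neither bound, nor any combination of them with the area constraint alone, upgrades the factor $g(d_N)$ to $g(d_M)$: an adversarial cover can place essentially all of the area of $\bigcup_i V_i$ at the coarsest scale $N$ (sets of diameter just below $d_N$ positioned over the components of $A_M$), in which case your estimate degenerates to $g(d_N)\abs{A_M}/(9C_0)$, which tends to $0$ as $M\to\infty$ for fixed $N$. The monotonicity of $t^2g(t)$ gives $h(t_i)\geq d_{n_i+1}^2g(d_{n_i+1})$, but combined with $\abs{V_i}\leq 9C_0d_{n_i}^2$ this is \emph{weaker} than the first bound (since $d_{n+1}^2g(d_{n+1})\leq d_n^2g(d_n)$), so it cannot supply the missing factor. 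The obstruction is structural: pure Lebesgue-area accounting cannot distinguish a coarse set that covers a ``heavy'' portion of $A$ from one that covers a negligible portion, and that distinction is exactly what is needed to apply hypothesis \eqref{69} at each set's own scale.

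The missing ingredient is the hierarchical (McMullen/Frostman) measure, which is how the cited proof (\cite{peter}, following \cite{mcmullen}, Proposition 2.2) proceeds: define $\mu$ by starting with normalized Lebesgue measure on $A_0$ and redistributing the mass of each $B\in\A_n$ onto $A_{n+1}\cap B$ proportionally to area; induction with condition (c) gives $\mu(B)\leq\abs{B}/(\abs{A_0}\prod_{j=0}^{n-1}\Delta_j)$ for $B\in\A_n$, and your thickening argument then yields $\mu(U)\leq C\,\mathrm{diam}(U)^2/(\abs{A_0}\prod_{j=0}^{n}\Delta_j)$ whenever $d_{n+1}\leq\mathrm{diam}(U)<d_n$. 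This localizes the comparison with $h$ to each covering set at its own scale: $h(t_i)/\mu(U_i)\geq c\,g(d_{n_i})\prod_{j\leq n_i}\Delta_j\geq c\inf_{n\geq N}g(d_n)\prod_{j\leq n}\Delta_j$, and summing against $\sum_i\mu(U_i)\geq\mu(A)=1$ finishes the proof, with no reference to a single global finest scale $M$. (Two smaller points: your $M:=\max_i(n_i+1)$ need not exist for an infinite cover with $t_i\to 0$, so you must first pass to a finite subcover of the compact set $A$ by open thickenings; and covering sets with $t_i=0$ need to be discarded or handled separately before the scales $n_i$ are assigned.)
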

The proof follows ideas of McMullen (\cite{mcmullen}, Proposition 2.2), it
can be found in \cite{peter}.

\subsection{Proof of Theorem \ref{main1}}

Let $F$ be the logarithmic transform of $f$. Since $\exp$ is bilipschitz on small disks, Lemma \ref{preserved} implies that it suffices to show that $\mathcal{H}^{h_{\lambda,\gamma}}(I(F))=\infty$.
Our goal is to construct $(\A_n)$ with $A=\bigcap A_n\subset I(F)$ and such that $(\A_n)$ satisfies the nesting conditions with sequences $(\Delta_n)$ and $(d_n)$ that meet the requirement \eqref{69} (where $g(t)=\Phi_\lambda(1/t)^\gamma$).
We will use the following result due to Aspenberg and Bergweiler \cite{aspenbergbergweiler}:
\begin{theo}\label{aspberg}
Let $g$ be entire and $W$ be a tract of $g$ such that $\{\abs{z}=r\}\not\subset W$ for all large $r$. Let $0<\beta<1/2$ and put
\begin{equation}\label{b2}
V_\beta:=\{z\in W:\abs{g(z)}\geq\exp(\abs{z}^\beta)\}
\end{equation}
and $\psi_{V_\beta}(r)=\mathcal{L}(\{t\in[0,2\pi]:re^{it}\in V_\beta\})$. Let $0<\kappa<1$. Then there exist constants $C,r_0>0$ such that
$$\log\log M(r,g)\geq\pi\int_{r_0}^{\kappa r}\frac{dt}{t\psi_{V_\beta}(t)}-C$$ for all $r\geq r_0/\kappa$.
\end{theo}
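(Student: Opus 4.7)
The plan is to derive the lower bound by the classical Ahlfors--Carleman distortion method applied to the simply connected domain $V_\beta$ (or a distinguished unbounded component thereof) inside the tract $W$. Write $u(z):=\log|g(z)|$, which is subharmonic on $\Com$ and harmonic away from the zeros of $g$. The boundary values of $u$ on $\partial V_\beta$ are simple to read off: on the portion lying interior to $W$ we have $u(z)=|z|^\beta$ by the definition of $V_\beta$, while on the portion lying in $\partial W$ we have $u(z)=0$ (using $|g|=1$ on $\partial W$ from the tract normalization). Hence $u\leq|z|^\beta$ on $\partial V_\beta$ and $u\geq|z|^\beta$ throughout $V_\beta$. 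The hypothesis that no circle $\{|z|=r\}$ is contained in $W$ for large $r$ ensures $\psi_{V_\beta}(t)\leq\psi_W(t)<2\pi$ on the range of integration, placing us squarely in the regime in which Ahlfors' distortion theorem is available.

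I would then set $B(r):=\sup\{u(z):z\in V_\beta,\,|z|=r\}$ and invoke the Carleman length--area inequality in $V_\beta$. In its sharp form, this inequality gives that for a subharmonic function vanishing on the side boundary of a simply connected domain with angular cross-section $\psi<2\pi$,
$$B(R)\geq c\,B(r_0)\exp\left(\pi\int_{r_0}^{R}\frac{dt}{t\,\psi_{V_\beta}(t)}\right).$$
In our situation $u$ does not vanish on $\partial V_\beta$ but is dominated there by $|z|^\beta$; because $\beta<1/2$, this polynomial boundary growth is dominated by the exponential main term (the exponent $1/2$ is precisely the Denjoy--Carleman--Ahlfors threshold for a single tract to contribute nontrivially to $\log\log M(r,g)$), and so the boundary contribution can be absorbed into an additive constant. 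Combining with $B(\kappa r)\leq\log M(\kappa r,g)\leq\log M(r,g)$ and taking logarithms yields the stated bound; the margin $\kappa<1$ is what leaves room for a point of $V_\beta$ at the outer radius and for the constant $r_0$ at the lower end of the integral, as the Ahlfors argument requires.

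The main obstacle is executing the distortion estimate cleanly despite the non-vanishing boundary values of $u$ and the possible disconnectedness of $V_\beta$. A tidy route is to uniformize the relevant component of $V_\beta$ by a conformal map $\phi:\mathbb{H}\to V_\beta$ and reduce the question to a Phragm\'{e}n--Lindel\"{o}f problem on the half-plane: $u\circ\phi$ is subharmonic on $\mathbb{H}$ with boundary data bounded by $|\phi(\cdot)|^\beta$, and the integral $\int dt/(t\,\psi_{V_\beta}(t))$ reappears via the Beurling--Ahlfors extremal-length description of the conformal distance, under $\phi^{-1}$, between the cross-cuts $V_\beta\cap\{|z|=r_0\}$ and $V_\beta\cap\{|z|=\kappa r\}$. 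The $\beta<1/2$ absorption then provides the final technical step of the proof.
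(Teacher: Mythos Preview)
The paper does not prove this theorem; it is quoted verbatim from Aspenberg--Bergweiler \cite{aspenbergbergweiler} and used as a black box in the proof of Theorem~\ref{main1}. So there is no ``paper's own proof'' to compare against here.

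Your sketch is, however, in the spirit of the argument in \cite{aspenbergbergweiler}: the Ahlfors--Carleman distortion inequality (equivalently, the Tsuji estimate or extremal-length lower bound for harmonic measure in a strip-like domain) is indeed the engine, and the parameter restriction $\beta<1/2$ is exactly what allows the boundary contribution $|z|^\beta$ on $\partial V_\beta$ to be absorbed into the additive constant, since the Ahlfors integral forces growth of order at least $1/2$ in any domain with angular opening bounded by $2\pi$. Two points that would need tightening if you were to write this out in full: first, $V_\beta$ is in general neither connected nor simply connected, so one must fix an unbounded component and argue that the cross-section $\psi_{V_\beta}$ can only increase when passing to the full set (which works because the integrand $1/(t\psi)$ is monotone in $\psi$); second, the Carleman inequality in the form you state it applies to functions vanishing on the lateral boundary, so the reduction to that situation---either by subtracting a suitable harmonic majorant of $|z|^\beta$ or, as you suggest, by uniformizing and invoking Phragm\'en--Lindel\"of on $\mathbb{H}$ with controlled boundary growth---has to be carried out explicitly rather than asserted. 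With those two gaps filled, your outline matches what the cited reference does.
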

Fix a tract $W$ of $f$, $\tilde{\beta}\in(0,1/2)$ and $\tilde{\kappa}\in(0,1)$. Define $V_{\tilde{\beta}}$ as in \eqref{b2} and set $\tilde{\psi}:=\psi_{V_{\tilde{\beta}}}$.
By Theorem \ref{aspberg}, there exist constants $r_0>0$ and $C>0$ such that $$\pi\int_{e^{r_0}}^{\tilde{\kappa} e^r}\frac{dt}{t\tilde{\psi}(t)}-C\leq\log\log M(e^r,f)$$ whenever $\tilde{\kappa} e^r>e^{r_0}$. Let these constants be fixed. Let $T^0$ be a logarithmic tract of $f$ corresponding to $W$ and let $T^k:=T^0+2\pi ik$ for $k\in\mathbb{Z}$ (compare section 2).
Since $\log\log M(e^r,f)=\max_{\Re y=r}\log\Re F(y)$, we obtain with the transformation $\tilde{\theta}(r):=\tilde{\psi}(e^r)$ that
\begin{equation}\label{a}
\int_{r_0}^{r+\log\tilde{\kappa}}\frac{ds}{\tilde{\theta}(s)}\leq\frac{1}{\pi}\left(\max_{\Re y=r}\log\Re F(y)+C\right)
\end{equation}
whenever $r+\log\tilde{\kappa}\geq r_0$. It follows that
\begin{align*}
\int_{x_1}^{x_2}\frac{ds}{\tilde{\theta}(s)}&\leq\int_{r_0}^{x_2}\frac{ds}{\tilde{\theta}(s)}\leq\frac{1}{\pi}\left(\max_{\Re y=x_2-\log\tilde{\kappa}}\log\Re F(y)+C\right)\\
&\leq\frac{1}{\pi}\left((\rho(f)+\eps_1)(x_2-\log\tilde{\kappa})+C\right)\leq\frac{1}{\pi}\left((\rho(f)+\eps_1)x_2+C'\right)
\end{align*}
for $x_1\leq x_2-\log\tilde{\kappa}$, where $C'$ may depend on $\tilde{\kappa},\tilde{\beta},f$ and $W$.
Now we choose $\lambda\in(0,1/e)$ so small that $R_0:=3\beta_\lambda/\lambda$ is much larger than $C', r_0$ and $-\log\tilde{\kappa}$, and define inductively $$R_n:=\min\{2^kR_{n-1}:2^kR_{n-1}\geq e^{\lambda R_{n-1}}\}.$$
Then $R_n\geq e^{\lambda R_{n-1}}$ and $R_n<e^{(\lambda+\delta)R_{n-1}}$ for all $n\in\mathbb{N}$, where we choose $\delta>0$ such that $\lambda+\delta<\tilde{\beta}$.
For $R>0$, set $$\Q_R:=\{Q(2^kR+2^{k-1}R+ij2^kR,2^kR,0):j\in\mathbb{Z},k\in\mathbb{N}_0\}.$$
This implies that $\Q_{R_{n+1}}\subset\Q_{R_n}$ for all $n\in\mathbb{N}_0$.

We define our sequence $(\A_n)$ as follows:
Let $Q_0\in\Q_{R_0}$. Set $$\A_0:=\{Q_0\}$$
and define inductively $$\A_n:=\left\{B\subset B'\in\A_{n-1}: B\subset\bigcup_{k\in\mathbb{Z}}T^k,F^n(B)\text{ is defined and }F^n(B)\in\Q_{R_n}\right\}.$$
First, we estimate the diameters of the sets in $\A_n$. Let $B\in\A_n$ and define $Q:=F^n(B)\in\Q_{R_n}$.
There is a unique sequence $(m_j,k_j)_{j=1}^n$ with $$F^n|_B=(F_{T_{m_n}^{k_n}})\circ\ldots\circ(F_{T_{m_1}^{k_1}})|_B.$$
Let $z_0$ be the center of $Q$. Since $Q$ is convex, we have by \eqref{a1} that
\begin{align*}
\text{diam}((G_{T_{m_n}^{k_n}})(Q))&\leq\sup\left\{\abs{(G_{T_{m_n}^{k_n}})'(z)}: z\in Q\right\}\cdot\text{diam}(Q)\leq\frac{4\pi}{R_n}\sqrt{2}R_n<18.
\end{align*}
(For the definition of $G_T$, see \eqref{G}.)
It is easy to see that $(F_{T_{m_{n-1}}^{k_{n-1}}}\circ\ldots\circ F_{T_{m_1}^{k_1}})^{-1}$ is defined on $D(G_{T_{m_n}^{k_n}}(z_0),18)$.
Hence, setting $g(z):=e^{\lambda R}$ and using that $$G_{T_{m_n}^{k_n}}(z_0)\in G_{T_{m_n}^{k_n}}(Q)\subset\{\Re z>R_{n-1}\},$$ we obtain
\begin{align*}
\text{diam}(B)&=\text{diam}((F_{T_{m_{n-1}}^{k_{n-1}}}\circ\ldots\circ F_{T_{m_1}^{k_1}})^{-1}(G_{T_{m_n}^{k_n}}(Q)))\\
&\leq36\cdot\sup\left\{\abs{((F_{T_{m_{n-1}}^{k_{n-1}}}\circ\ldots\circ F_{T_{m_1}^{k_1}})^{-1})'(z)}: z\in D(G_{T_{m_n}^{k_n}}(z_0),18)\right\}\\
&\leq\frac{36\cdot 4\pi}{R_{n-1}-18}\leq\frac{400}{g^{n-1}(R_0)}.
\end{align*}
Because $g^{n-1}(R_0)=\exp(E_\lambda^{n-2}(\lambda R_0))=\exp(E_\lambda^{n-2}(3\beta_\lambda))\geq E_\lambda^{n-1}(3\beta_\lambda)$, we deduce
$$\text{diam}(B)\leq\frac{1}{E_\lambda^{n-1}(2\beta_\lambda)}.$$

Now we estimate the density of $A_{n+1}$ in some set $B\in\A_n$ using
Theorem \ref{aspberg}. Let $B\in\A_n$, so that $Q:=F^n(B)\in\Q_{R_n}$. Again, there is a unique sequence $(m_i,k_i)$ with $Q=(F_{T_{m_n}^{k_n}})\circ\ldots\circ(F_{T_{m_1}^{k_1}})(B)$. We denote the inverse function of $(F_{T_{m_n}^{k_n}})\circ\ldots\circ(F_{T_{m_1}^{k_1}})|_B$ by $\phi_{Q,B}$.
Define $$S_Q:=\hspace{-0.5cm}\bigcup_{\overset{k\in\mathbb{Z},\hat{Q}\in\mathcal{Q}_{R_{n+1}}}{G_{T^k}(\hat{Q})\subset Q}}\hspace{-0.5cm}G_{T^k}(\hat{Q}).$$ Note that $$S_Q\subset\hspace{-0.2cm}\bigcup_{B'\in\mathcal{A}_{n+1}}\hspace{-0.2cm}F^n(B').$$ In fact, if $\hat{Q}\in\mathcal{Q}_{R_{n+1}}$ and $k\in\mathbb{Z}$ with $G_{T^k}(\hat{Q})\subset Q$ are given, then $$B':=\phi_{Q,B}(G_{T^k}(\hat{Q}))\subset B$$ and $F^{n+1}(B')=\hat{Q}$. So $B'\in\mathcal{A}_{n+1}$ and $F^n(B')=G_{T^k}(\hat{Q})$.
Lemma \ref{eigensch} implies
\begin{align*}
\text{dens}(A_{n+1},B)&=\text{dens}\left(\bigcup_{B'\in\A_{n+1}}B',B\right)\\
&\geq\frac{1}{L(\phi_{Q,B})^2}\cdot\text{dens}\left(\bigcup_{B'\in\A_{n+1}}F^n(B'),Q\right)\geq\frac{1}{L(\phi_{Q,B})^2}\cdot\text{dens}(S_Q,Q).
\end{align*}
Let $z_0$ be the center of $Q$ and choose $k\in\mathbb{N}_0$ such that $Q=Q(z_0,2^kR_n,0)$. Define $Q^*:=Q(z_0,2^kR_n-18,0)$. We introduce the notation $$U_R:=\left\{z\in\bigcup_{k\in\mathbb{Z}}T^k:\Re F(z)>R\right\}.$$
If $\hat{Q}\in\Q_{R_{n+1}}$ and $x\in S_Q\cap Q^*$, then it is immediate by the definition of $U_R$ that $x\in U_{R_{n+1}}\cap Q^*$. On the other hand, if $x\in U_{R_{n+1}}\cap Q^*$ and $k\in\mathbb{Z}$ with $x\in T^k$ are given, then $F(x)\in\hat{Q}$ for some $\hat{Q}\in\Q_{R_{n+1}}$. It follows that $G_{T^k}(\hat{Q})\cap Q^*\neq\emptyset$, and diam$(G_{T^k}(\hat{Q}))<18$ yields $G_{T^k}(\hat{Q})\subset Q$. So $x\in S_Q\cap Q^*$. The above considerations imply
$$S_Q\cap Q^*=T_{R_{n+1}}\cap Q^*.$$
Using this relation, we obtain
\begin{align*}
\text{dens}(A_{n+1},B)&\geq\frac{1}{L(\phi_{Q,B})^2}\cdot\text{dens}(S_Q,Q)=\frac{1}{L(\phi_{Q,B})^2}\cdot\frac{\abs{S_Q\cap Q}}{\abs{Q}}\\
&\geq\frac{1}{L(\phi_{Q,B})^2}\cdot\frac{\abs{S_Q\cap Q^*}}{\abs{Q^*}}\frac{\abs{Q^*}}{\abs{Q}}\geq\frac{1}{L(\phi_{Q,B})^2}\cdot\frac{\abs{U_{R_{n+1}}\cap Q^*}}{\abs{Q^*}}(1-\eps_2)\\
&\geq\frac{1}{L(\phi_{Q,B})^2}\cdot\text{dens}(U_{R_{n+1}},Q^*)(1-\eps_2).
\end{align*}
Let us estimate dens$(U_{R_{n+1}},Q^*)$. Let $x_1$ and $x_2$ be the minimal resp. maximal real part of points in $Q^*$. Applying \eqref{a} and the Cauchy-Schwarz inequality,
\begin{align*}
\abs{Q^*}&=(x_2-x_1)^2=\left(\int_{x_1}^{x_2}1 ds\right)^2=\left(\int_{x_1}^{x_2}\frac{\sqrt{\tilde{\theta}(s)}}{\sqrt{\tilde{\theta}(s)}}ds\right)^2\\
&\leq\int_{x_1}^{x_2}\tilde{\theta}(s)ds\cdot\int_{x_1}^{x_2}\frac{1}{\tilde{\theta}(s)}ds\leq\int_{x_1}^{x_2}\tilde{\theta}(s)ds\cdot\frac{1}{\pi}\left((\rho(f)+\eps_1)x_2+C'\right).
\end{align*}
Since $\lambda+\delta<\tilde{\beta}$, we have $\mathcal{L}(U_{R_{n+1}}\cap\{\Re z=s\})>\tilde{\theta}(s)$ for all $s\in[x_1,x_2]$. Further, at least $(1-\eps_2)(x_2-x_1)/2\pi=(1-\eps_2)x_2/4\pi$ of the $T^k$ intersect $Q^*$, so
\begin{align*}
\text{dens}(U_{R_{n+1}},Q^*)&\geq\left(1-\eps_2\right)\frac{x_2}{4\pi}\int_{x_1}^{x_2}\tilde{\theta}(s)ds\\
&\geq\left(1-\eps_2\right)\frac{x_2}{4\pi}\frac{\pi\abs{Q^*}}{\abs{Q^*}((\rho(f)+\eps_1)x_2+C')}\geq\frac{1-\eps_2}{4(\rho(f)+\eps_1)+\frac{C'}{x_2}}.
\end{align*}
Since $x_2$ is much bigger than $C'$ by hypothesis on $R_0$,
we obtain
\begin{equation}\label{1}
\text{dens}(U_{R_{n+1}},Q^*)\geq\frac{1}{4\rho(f)+\eps_3}.
\end{equation}

Now we show that $L(\phi_{Q,B})$ has an upper bound that is independent of $Q$ and $B$. This fact is well known and can be proved as follows: By Lemma \ref{eigensch}, we have $$L(\phi_{Q,B})\leq L(G_{T_{m_n}^{k_n}}|_Q)\cdot L(G_{T_{m_{n-1}}^{k_{n-1}}}|_{G_{T_{m_n}^{k_n}}(Q)})\cdot\ldots\cdot L(G_{T_{m_1}^{k_1}}|_{G_{T_{m_2}^{k_2}}\circ\ldots\circ G_{T_{m_n}^{k_n}}(Q)}).$$
Note that $G_{T_{m_n}^{k_n}}$ is defined on $D(z_0,3\cdot2^k\cdot R_n)\supset\overline{D(z_0,\sqrt{2}\cdot2^k\cdot R_n)}\supset Q$. Lemma \ref{first} yields that $L(G_{T_{m_n}^{k_n}}|_Q)\leq c_0$, independent of $Q$ and $F$.
Similarly, abbreviating $G_j:=G_{T_{m_j}^{k_j}}\circ\ldots\circ G_{T_{m_n}^{k_n}}$ for $j=2,\ldots,n$, we have $G_j(Q)\subset\{\Re z>R_0\}$ for all $j$. So $G_{T_{m_{j-1}}^{k_{j-1}}}$ is defined on
$$D(G_j(z_0),R_0)\supset D\left(G_j(z_0),\frac{1}{E_\lambda^{n-j+1}(2\beta_\lambda)}\right)\supset D(G_j(z_0),\text{diam }G_j(Q)).$$
Using Lemma \ref{first} again, one obtains $$L(G_{T_{m_{j-1}}^{k_{j-1}}}|_{G_{T_{m_j}^{k_j}}\circ\ldots\circ G_{T_{m_n}^{k_n}}(Q)})\leq\left(
1+\frac{2}{R_0E_\lambda^{n-j+1}(2\beta_\lambda)-1}\right)^4=\left(1+\frac{c_1}{E_\lambda^{n-j+1}(2\beta_\lambda)}\right)^4$$
for all $j=2,\ldots,n$.
It is clear that $$\prod_{k=1}^\infty\left(1+\frac{c_1}{E_\lambda^k(2\beta_\lambda)}\right)^4=c_2<\infty,$$
so
\begin{equation}\label{2}
L(\phi_{Q,B})\leq c_0c_2
\end{equation}
independent of $Q$ and $B$.
The formulas \eqref{1} and \eqref{2} imply that
\begin{equation*}
\text{dens}(A_{n+1},B)\geq\frac{c_3}{\rho(f)},
\end{equation*}
where $c_3$ does not depend on $n,Q$ or $f$. It follows by the functional equation \eqref{99} that
\begin{align*}
\Phi_\lambda(1/d_k)^\gamma\prod_{j=1}^k\Delta_j&\geq\Phi_\lambda(E_\lambda^{k-1}(2\beta_\lambda))^\gamma c_3^k\frac{1}{\rho(f)^k}\\
&=\Phi_\lambda(2\beta_\lambda)^\gamma\cdot\beta_\lambda^{(k-1)\gamma}c_3^k\frac{1}{\rho(f)^k}\\
&=\frac{\Phi_\lambda(2\beta_\lambda)^\gamma}{\beta_\lambda^\gamma}\left(\frac{\beta_\lambda^\gamma}{\rho(f)}c_3\right)^k.
\end{align*}
This term tends to $\infty$ as $k\to\infty$
whenever $\beta_\lambda^\gamma c_3/\rho(f)>1$, that is, $$\gamma>\frac{\log\rho(f)-\log c_3}{\log\beta_\lambda}.$$
For such values of $\gamma$, an application of Lemma \ref{infinity} yields $\mathcal{H}^{h_{\lambda,\gamma}}(I(F))=\infty$. As mentioned at the beginning of the proof, Lemma \ref{preserved} implies $\mathcal{H}^{h_{\lambda,\gamma}}(I(f))=\infty$ for the special parameter $\lambda$ that we chose. Finally, Theorem \ref{unabh} shows that this is true for every $\lambda\in(0,1/e)$.

\section{The estimate from above}

\subsection{Mittag-Leffler functions and the Besicovitch theorem}

Before we start with the proof of Theorem \ref{main2}, let us remark a simple fact. For $\rho>1/2$, we define
\begin{equation}\label{a2}
S_\rho:=\left\{\Im z\in\left[\frac{\pi}{2\rho},2\pi-\frac{\pi}{2\rho}\right](\text{mod }2\pi)\right\}.
\end{equation}
Note that we can find a constant $K=K(\rho)>0$ with the following property: If $B(z)=Q(z,K,\theta)$ is any square, then we can find disjoint squares $$B^j(z):=Q(w_j,2\pi(1-1/(2\rho)),0)\subset S_\rho\cap B(z),$$ $j=1,\ldots,k_1(B(z)),$ with
\begin{equation}\label{h}
\text{dens}\left(\bigcup_{j=1}^{k_1(B(z))} B^j(z),B(z)\right)\geq\left(1-\frac{1}{2\rho}\right)-\eps_1.
\end{equation}
For a given parameter $\rho>1/2$,
we define the \emph{Mittag-Leffler function with parameter $\rho$} by $$f_\rho(z):=\sum_{n=0}^\infty\frac{z^n}{\Gamma(\frac{n}{\rho}+1)}.$$
It is well-known that $\rho(f_\rho)=\rho$, which follows from the following representation of $f_\rho$ (see for example \cite[p. 83]{goldbergostrovskii}):
$$f_\rho(z)=
\begin{cases}
\rho\exp(z^\rho)+g_1(z)&,z\in U_\delta:=\left\{\abs{\text{arg}(z)}\leq\frac{\pi}{2\rho}+\delta\right\}\\
g_2(z)&,\abs{\text{arg}(z)}>\frac{\pi}{2\rho}
\end{cases},$$
where $0<\delta\leq\max\{\frac{\pi}{2\rho},(1-\frac{1}{2\rho})\pi\}$ and $g_i(z)=O(1/\abs{z})$ as $z\to\infty$, for $i=1,2$. It is not difficult to show that $f_\rho\in\mathcal{B}$, an argument can be found in \cite[section 4]{aspenbergbergweiler}.
Choose $C_0>0$ with $\abs{g_1(z)}\leq\frac{C_0}{\abs{z}}$ for all $z$. Let $R\gg\frac{C_0}{\sin(\delta)}$ and $R\gg K$ be so large that $\abs{g_2(z)}<1$ for all $z\in(\Com\setminus D(0,R))$ with $\abs{\text{arg}(z)}>\frac{\pi}{2\rho}$. Now choose $a>0$ so small that the function $f_{a,\rho}(z):=a f_\rho(z)$ satisfies $S(f_{a,\rho})\subset\mathbb{D}$ and $f_{a,\rho}(D(0,R))\subset\mathbb{D}$. The choice of $a$ implies that $f_{a,\rho}(z)\in\mathbb{D}$ for every $z\in\Com$ with $\abs{\text{arg}(z)}>\frac{\pi}{2\rho}$.
It follows that there is no logarithmic tract of $f_{a,\rho}$ meeting the set $S_\rho$.
In particular, $S_\rho\cap\mathcal{J}(F)=\emptyset$ if $F$ is the logarithmic transform of $f_{a,\rho}$.

By differentiating $F(z)=\log(f_{a,\rho}(\exp(z)))$, one obtains
$$F'(z)=\frac{f_\rho'(\exp(z))}{f_\rho(\exp(z))}\cdot\exp(z)$$
for all $z\in\mathcal{T}(f_{a,\rho})$.
Since $\abs{\text{arg}(\exp(z))}\leq\frac{\pi}{2\rho}$ for $z\in\mathcal{T}(f_{a,\rho})$, setting $w:=\exp(z)$ yields $\abs{w}\geq R$ and $D(w,\abs{w}\sin(\delta))\subset U_\delta$. By Cauchy's integral formula,
\begin{align*}
\abs{f_\rho'(w)-\rho^2w^{\rho-1}\exp(w^\rho)}&=\abs{g_1'(w)}=\abs{\frac{1}{2\pi i}\int_{\partial D(w,\abs{w}\sin(\delta))}\frac{g_1(\zeta)}{(w-\zeta)^2}d\zeta}\\
&\leq\frac{1}{2\pi}\frac{1}{\abs{w}^2\sin(\delta)^2}\frac{C_0}{\abs{w}}2\pi\abs{w}\sin(\delta)=\frac{C_0}{\abs{w}^2\sin(\delta)}
\end{align*}
and hence
\begin{equation}\label{c}
\abs{\frac{f_\rho'(w)w}{\rho^2w^\rho\exp(w^\rho)}-1}\leq\frac{C_0}{\rho^2w^\rho\exp(w^\rho)\sin(\delta)}\leq\eps_2.
\end{equation}
Further it is clear that
\begin{equation}\label{c2}
\abs{\frac{\rho\exp(w^\rho)}{f_\rho(w)}-1}\leq\frac{C_0}{\abs{w}}\leq\eps_2.
\end{equation}
The formulas \eqref{c} and \eqref{c2} imply
\begin{align}\label{d}
\abs{\frac{F'(z)}{\rho\exp(\Re z)^\rho}}&=\abs{\frac{F'(z)}{\rho w^\rho}}=\abs{\frac{f_\rho'(w)w}{f_\rho(w)\rho w^\rho}}\notag\\
&=\abs{\frac{f_\rho'(w)w}{\rho^2w^\rho\exp(w^\rho)}\frac{\rho\exp(w^\rho)}{f_\rho(w)}}\in(1-\eps_3,1+\eps_3)
\end{align}
for every $z\in\mathcal{T}(f_{a,\rho})$. We will use this estimate in the proof of Theorem \ref{main2}.

Another result that we will need is a version of the Besicovitch covering theorem which
follows easily from \cite[Theorem 5.4]{bliedtnerloeb}:
\begin{theo}\label{besi}
There exists a universal constant $N_0>0$ with the following property:
Let $A\subset\Com$. For every $z\in A$, let $r_z>0$ and $A_z=Q(z,r_z,0)$. Then there exists a countable subset $B\subset A$ such that $$A\subset\bigcup_{z\in B}A_z$$ and every $z\in A$ is contained in at most $N_0$ elements of $\{A_y: y\in B\}$.
\end{theo}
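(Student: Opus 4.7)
The plan is to reduce Theorem \ref{besi} to the standard Besicovitch covering theorem for Euclidean balls, which should be what \cite[Theorem 5.4]{bliedtnerloeb} provides. The only real content is a geometric comparison between squares and disks, since in the plane every square $Q(z,r,0)$ contains the disk $D(z,r/2)$ and is contained in the disk $D(z,r/\sqrt{2})$. I would use both inclusions: the inner one to produce a cover, the outer one to bound the overlap.

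Concretely, I would first associate to the given family $\{A_z\}_{z\in A}$ of squares the family of disks $\{D(z,r_z/\sqrt{2})\}_{z\in A}$. Since $A_z\subset D(z,r_z/\sqrt{2})$, this family covers $A$. Applying the classical Besicovitch theorem to it yields a universal constant $N_1$ (depending only on dimension) and a countable subset $B\subset A$ such that the chosen disks $D(z,r_z/\sqrt{2})$, $z\in B$, still cover $A$ and every point of $\Com$ lies in at most $N_1$ of them. Since $D(z,r_z/\sqrt{2})\supset A_z$, this alone does not immediately give a cover by the squares $A_z$.

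To repair this, I would apply the Besicovitch theorem not to the larger disks but to the inscribed disks $D(z,r_z/2)\subset A_z$, obtaining a countable $B\subset A$ such that $A\subset\bigcup_{z\in B}D(z,r_z/2)\subset\bigcup_{z\in B}A_z$. For the multiplicity bound one then checks: if $w\in A_z=Q(z,r_z,0)$, then $|w-z|<r_z/\sqrt{2}$, so $w\in D(z,r_z/\sqrt{2})$; thus the number of indices $z\in B$ with $w\in A_z$ is dominated by the number of indices with $w\in D(z,r_z/\sqrt{2})$. Since the two radii $r_z/2$ and $r_z/\sqrt{2}$ differ by only a fixed factor, a routine pigeonhole argument (splitting $B$ into finitely many subfamilies on which the radii $r_z$ are comparable, and comparing with the Besicovitch bound applied to the inner disks on each subfamily) yields a universal overlap constant $N_0$.

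The only point needing genuine care is this last step, where one converts an overlap estimate for a family of disks of radius $r_z/2$ into one for a family of squares of side $r_z$. This is purely a geometric bookkeeping issue with no analytic content, which is precisely why the author writes that the result \emph{follows easily} from \cite[Theorem 5.4]{bliedtnerloeb}. The constant $N_0$ obtained depends only on the dimension of the ambient space, in accordance with the universality claim of the theorem.
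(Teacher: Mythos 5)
The paper offers no proof of this statement at all --- it is quoted directly from \cite[Theorem 5.4]{bliedtnerloeb} --- so the only issue is whether your reduction to the classical Besicovitch theorem for disks is sound. It is not, and the failure occurs exactly at the step you flag as ``the only point needing genuine care.'' Applying Besicovitch to the inscribed disks $D(z,r_z/2)$ does give a countable $B\subset A$ with $A\subset\bigcup_{z\in B}D(z,r_z/2)\subset\bigcup_{z\in B}A_z$ together with a bounded-overlap property \emph{for the inscribed disks}; but bounded overlap of a family of disks does not pass to their $\sqrt{2}$-dilates, hence not to the circumscribing squares, because the radii occurring in $B$ may span infinitely many scales. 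Concretely, let $r_k=100^{-k}$, $x_k=\tfrac{6}{5}r_k e^{i\pi/4}$, and take $A=\{0\}\cup\{x_k:k\geq1\}$ with $A_{x_k}=Q(x_k,2r_k,0)$ (and any harmless small square assigned to $0$). The inscribed disks $D(x_k,r_k)$ are pairwise disjoint (for $j>k$ one has $\abs{x_k-x_j}\geq\tfrac{6}{5}(r_k-r_j)>r_k+r_j$ since $r_k>11r_j$), each is needed to cover its own center, and the greedy Besicovitch selection therefore selects all of them; this family satisfies every conclusion of the disk theorem with overlap constant $1$. Yet $0\notin D(x_k,r_k)$ while $0\in Q(x_k,2r_k,0)$ for every $k$, because $\abs{\Re x_k}=\abs{\Im x_k}=\tfrac{6}{5}r_k/\sqrt{2}<r_k$. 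So every selected square contains the point $0\in A$, and the multiplicity is infinite. Your proposed repair --- splitting $B$ into finitely many subfamilies of comparable radii --- cannot work, precisely because nothing bounds the number of distinct scales among the squares containing a fixed point; within each dyadic scale the count is indeed bounded, but infinitely many scales may contribute.

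The statement is of course true, but one must invoke a covering theorem that applies to the squares themselves rather than post-process the disk version through a fixed dilation. The cleanest observation is that $Q(z,r,0)$ is exactly the ball of radius $r/2$ about $z$ for the norm $\norm{x+iy}:=\max(\abs{x},\abs{y})$, and the Besicovitch covering theorem --- both the greedy selection and the geometric multiplicity lemma --- holds verbatim for the balls of an arbitrary norm on $\mathbb{R}^n$, with a constant depending only on $n$ and the norm. The multiplicity lemma crucially uses that the common point $w$ lies \emph{in} each selected ball (forcing $\abs{w-x_i}<r_i$ in the relevant norm and yielding directional separation of the centers), not merely in a fixed dilate of it; this is the information your argument discards. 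Alternatively one may cite the Morse--Besicovitch theorem for families of convex sets of uniformly bounded eccentricity, or the general form in \cite[Theorem 5.4]{bliedtnerloeb}, either of which covers squares directly.
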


\subsection{Proof of Theorem \ref{main2}}

For $\rho>1/2$, let $a$ and $f_{a,\rho}$ be as in section 4.1, so that \eqref{d} holds for the logarithmic transform $F$ of $f_{a,\rho}$. In analogy to the proof of Theorem \ref{main1}, we show that $\mathcal{H}^{h_{\lambda,\gamma}}(\mathcal{J}(F))=0$ whenever $\rho$ is large enough. Lemma \ref{preserved} then implies that $\mathcal{H}^{h_{\lambda,\gamma}}(\mathcal{J}(f_{a,\rho}))=0$, since $\mathcal{J}(f_{a,\rho})\cap\mathbb{D}=\emptyset$ (compare the remark at the end of section 2.2.1).
Let $S_\rho$ be defined as in \eqref{a2} and recall that $S_\rho\cap\mathcal{J}(F)=\emptyset$.

Choose a point $z_0\in\mathcal{J}(F)$ and set $z_n:=F^n(z_0)$ for $n\in\mathbb{N}$.
Let $$B_n(z_0):=Q(z_n,K,\arg(F^n)'(z_0))$$ and let $\phi_{n,z_0}$ be the branch of $(F^n)^{-1}$ that maps $z_n$ to $z_0$. Because $\mathcal{T}(f)\subset\mathbb{H}$, $\phi_{n,z_0}$ is defined on $\mathbb{H}$ for all $n\in\mathbb{N}$,
so $K\ll R$ implies $$D(\phi_{n,z_0}|_{B_n(z_0)})\leq 1+\eps_4$$ uniformly in $n$ and $z_0$. Lemma \ref{second} yields
\begin{align}\label{xyxy}
Q_n^1(z_0)&:=Q(z_0,\abs{(\phi_{n,z_0})'(z_n)}K(1-\eps_5),0)\\
&\subset L_n(z_0):=\phi_{n,z_0}(R_n(z_0))\notag\\
&\subset Q(z_0,\abs{(\phi_{n,z_0})'(z_n)}K(1+\eps_5),0)=:Q_n^2(z_0),\notag
\end{align}
so
\begin{equation}\label{a3}
\frac{\abs{Q_n^1(z_0)}}{\abs{Q_n^2(z_0)}}\geq 1-\eps_6.
\end{equation}
Since
\begin{align*}
\text{dens}(F^n(Q_n^1(z_0)),B_n(z_0))&\geq \frac{1}{D(\phi_{n,z_0}|_{B_n(z_0)})^2}\text{dens}(Q_n^1(z_0),L_n(z_0))\\
&\geq(1-\eps_7)\text{dens}(Q_n^1(z_0),Q_n^2(z_0))\geq(1-\eps_8),
\end{align*}
we have
\begin{equation}\label{e}
\abs{F^n(Q_n^1(z_0))}\geq(1-\eps_8)\abs{B_n(z_0)}.
\end{equation}
Further,
\begin{equation}\label{z1}
F^n(Q_n^1(z_0))\supset Q(z_n,\abs{(F^n)'(z_0)}\abs{\phi_{n,z_0}'(z_n)}K(1-\eps_5)^2)=Q(z_n,(1-\eps_9)K).
\end{equation}
We assume that $B_n^1(z_0),\ldots,B_n^{k_2(B_n(z_0))}(z_0)$ are the $B_n^j$ that are contained in $F^n(Q_n^1(z_0))$. Then \eqref{z1} implies
\begin{equation}\label{a4}
\frac{k_2(B_n(z_0))}{k_1(B_n(z_0))}\geq1-\eps_{10}.
\end{equation}
Note that all of these estimates are uniform in both $z_0$ and $n$.

Let $$s_n(z_0):=\abs{\phi_{n,z_0}'(z_n)}K(1-\eps_5)=\frac{K(1-\eps_5)}{\abs{(F^n)'(z_0)}}$$ be the side length of $Q_n^1(z_0)$. By \eqref{d}, we have
\begin{align*}
s_{n+1}(z_0)&=\frac{(1-\eps_5)K}{\abs{(F^{n+1})'(z_0)}}=(1-\eps_5)\frac{1}{\abs{F'(F^n(z_0))}}\frac{K}{\abs{(F^n)'(z_0)}}\\
&=s_n(z_0)\frac{1}{\abs{F'(F^n(z_0))}}\geq(1-\eps_3)\frac{1}{\rho\exp(\Re F^n(z_0))^\rho}s_n(z_0)\\
&\geq(1-\eps_3)\frac{1}{\rho\exp(4\pi\abs{(F^n)'(z_0)})^\rho}s_n(z_0)\\
&=(1-\eps_3)\frac{1}{\rho\exp(\rho4\pi\abs{(F^n)'(z_0)})}s_n(z_0)\\
&\geq(1-\eps_3)\frac{1}{\rho\exp\left(\frac{\rho4\pi K(1+\eps_5)}{s_n(z_0)}\right)}s_n(z_0)\geq\frac{1}{\rho\exp\left(\frac{\rho(4\pi+\eps_{11})K}{s_n(z_0)}\right)}.
\end{align*}
Let $M\gg K$ be large and set $c:=4\pi+\eps_{11}$.
We fix a small number $r_0>0$ and define inductively $$r_{n+1}:=\frac{1}{\rho M\exp\left(\frac{\rho cK}{Mr_n}\right)}$$
for all $n$. For every $z\in\mathcal{J}(F)$, we claim that we can find $n(z)\in\mathbb{N}$ such that whenever $n\geq n(z)$, there exists $m_z(n)$ with $Mr_{n+1}\leq s_{m_z(n)}(z)\leq Mr_n$.
In fact, if $r_{n+1}\leq\frac{s_k(z)}{M}\leq r_n$, we find $$\frac{s_{k+1}(z)}{M}\geq\frac{1}{M\rho\exp\left(\frac{\rho cK}{s_k(z)}\right)}\geq\frac{1}{M\rho\exp\left(\frac{\rho cK}{Mr_{n+1}}\right)}=r_{n+2}.$$
We define $$\mathcal{Q}_n:=\{Q(z,r_n,0): z=(k+il)r_n\text{ for some }k,l\in\mathbb{Z}\}$$ to be the collection of squares in a $r_n$-mesh that covers the complex plane.

Let $Q:=Q(w,r_n,0)\in\mathcal{Q}_n$ be a square which meets $\mathcal{J}(F)$, where $$n\geq\max_{z\in\mathcal{J}(F)\cap Q}n(z).$$ We consider the square $$\tilde{Q}:=Q(w,(M+1)r_n,0).$$ Let us assign a square $Q_n(z)$ to every $z\in\tilde{Q}$ in the following way: If $z\in Q\setminus\mathcal{J}(F)$, let $Q_n(z):=Q(z,r,0)$ be a square with $Q_n(z)\cap\mathcal{J}(F)=\emptyset$. If $z\in\tilde{Q}\setminus Q$, let $Q_n(z):=Q(z,r,0)$ be such that $Q_n(z)\cap Q=\emptyset$. Finally, if $z\in Q\cap\mathcal{J}(F)$, let $$Q_n(z):=Q(z,s_{m_z(n)}(z),0)=Q_{m_z(n)}^1(z).$$
By Theorem \ref{besi}, there exist countable sets $X\subset Q\cap\mathcal{J}(F)$ and $Y\subset\tilde{Q}\setminus(Q\cap\mathcal{J}(F))$ such that $$\tilde{Q}\subset\bigcup_{z\in X}Q(z,s_{m_z(n)}(z))\cup\bigcup_{z\in Y}Q_n(z),$$ and every $z\in\tilde{Q}$ is contained in at most $N_0$ of these squares.
It follows that $$\abs{\mathcal{J}(F)\cap Q}=\abs{\mathcal{J}(F)\cap\bigcup_{z\in X}Q_n(z)}.$$ Let $z_0\in X$ and abbreviate $m:=m_{z_0}(n)$. Let $$L_m^j(z_0):=\phi_{m,z_0}(B_m^j(z_0))$$ for all $j=1,\ldots,k_1(B_m(z_0))$. Using small distortion again, we can find squares $Q_{L_m^j(z_0)}^k, k=1,2$, with $$Q_{L_m^j(z_0)}^1\subset L_m^j(z_0)\subset Q_{L_m^j(z_0)}^2$$ and $$\frac{\abs{Q_{L_m^j(z_0)}^1}}{\abs{Q_{L_m^j(z_0)}^2}}\geq 1-\eps_6$$ (compare \eqref{a3}). Let $w_j$ be the center of $B_m^j(z_0)$ and $v_j:=\phi_{m,z_0}(w_j)$. The side length $l(Q_{L_m^j(z_0)}^1)$ of $Q_{L_m^j(z_0)}^1$ satisfies
\begin{align*}
l(Q_{L_m^j(z_0)}^1)&\geq(1-\eps_5)\abs{(\phi_{m,z_0})'(w_j)}2\pi(1-1/(2\rho))=\frac{1-\eps_5}{\abs{(F^m)'(v_j)}}2\pi(1-1/(2\rho))\\
&\geq\frac{1-\eps_{12}}{\abs{(F^m)'(z_0)}}2\pi(1-1/(2\rho))
\end{align*}
by Lemma \ref{first}. Since $$\frac{1}{\abs{(F^m)'(z_0)}}=\frac{s_m(z_0)}{K(1-\eps_5)}$$ by the formula \eqref{xyxy}, we obtain
\begin{align*}
l(Q_{L_m^j(z_0)}^1)&\geq(1-\eps_{13})2\pi(1-1/(2\rho))\frac{s_m(z_0)}{K}\\
&\geq(1-\eps_{13})2\pi(1-1/(2\rho))\frac{M}{K}r_{n+1}\geq\tilde{M}r_{n+1},
\end{align*}
We have $\tilde{M}\gg 1$ since $M\gg K$. It follows that $Q_{L_m^j(z_0)}^1$ contains at least $\abs{Q_{L_m^j(z_0)}^1}/r_{n+1}^2$ elements of $\mathcal{Q}_{n+1}$. So the number $C_{m}^j(z_0)$ of squares in $\mathcal{Q}_{n+1}$ that are contained in $L_m^j(z_0)$ satisfies
\begin{equation}\label{f}
C_{m}^j(z_0)\geq(1-\eps_{14})\frac{\abs{Q_{L_m^j(z_0)}^1}}{r_{n+1}^2}\geq(1-\eps_{15})\frac{\abs{Q_{L_m^j(z_0)}^2}}{r_{n+1}^2}
\geq(1-\eps_{15})\frac{\abs{L_m^j(z_0)}}{r_{n+1}^2}
\end{equation}

Recall that the $B_m^j(z_0)$ do not intersect $\mathcal{J}(F)$ since $B_m^j(z_0)\subset S_\rho$. Hence we can estimate the number $N'(n,z_0)$ of the elements of $\mathcal{Q}_{n+1}$ that are contained in $Q_n(z_0)=Q_m^1(z_0)$ and that do not meet $\mathcal{J}(F)$:
\begin{align*}
B'(n,z_0)&\overset{\eqref{f}}{\geq}\sum_{j=1}^{k_2(B_m(z_0))}(1-\eps_{15})\frac{\abs{L_m^j(z_0)}}{r_{n+1}^2}\\
&=(1-\eps_{15})\frac{s_m(z_0)^2}{r_{n+1}^2}\sum_{j=1}^{k_2(B_m(z_0))}\frac{\abs{L_m^j(z_0)\cap Q_n(z_0)}}{\abs{Q_n(z_0)}}\\
&\geq(1-\eps_{16})\frac{s_m(z_0)^2}{r_{n+1}^2}\text{dens}\left(\bigcup_{j=1}^{k_2(B_m(z_0))}B_m^j(z_0),F^m(Q_n(z_0))\right)\\
&\overset{\eqref{a4}}{\geq}(1-\eps_{17})\frac{s_m(z_0)^2}{r_{n+1}^2}\frac{\abs{\bigcup_{j=1}^{k_1(B_m(z_0))}B_m^j(z_0)}}{\abs{F^m(Q_n(z_0))}}\\
&\overset{\eqref{e}}{\geq}(1-\eps_{18})\frac{s_m(z_0)^2}{r_{n+1}^2}\frac{\abs{\bigcup_{j=1}^{k_1(B_m(z_0))}B_m^j(z_0)}}{\abs{B_m(z_0)}}\\
&=(1-\eps_{18})\frac{s_m(z_0)^2}{r_{n+1}^2}\text{dens}\left(\bigcup_{j=1}^{k_1(B_m(z_0))}B_m^j(z_0),B_m(z_0)\right)\\
&\overset{\eqref{h}}{\geq}(1-\eps_{19})\left(1-\frac{1}{2\rho}\right)\frac{s_m(z_0)^2}{r_{n+1}^2}.
\end{align*}
It follows that the number $N(n,z_0)$ of the squares in $\Q_{n+1}$ that are sufficient to cover $\mathcal{J}(F)\cap Q_n(z_0)$ satisfies $$N(n,z_0)\leq \frac{1}{2\rho}(1+\eps_{19})\frac{s_m(z_0)^2}{r_{n+1}^2}.$$
Since all the squares $Q_n(z)$ with $z\in Y$ do not intersect $\mathcal{J}(F)\cap Q$, it follows that the number $N(Q)$ of squares in $\Q_{n+1}$ that can intersect $\mathcal{J}(F)\cap Q$ satisfies $$N(Q)\leq\frac{1}{2\rho}(1+\eps_{19})\frac{1}{r_{n+1}^2}\sum_{z\in X}(s_{m_z(n)}(z))^2.$$ Because every $y\in Q$ is contained in at most $N_0$ squares $Q_n(z)$ and $s_{m_z(n)}^2\leq M^2r_n^2$ for all $z$, this implies  $$N(Q)\leq\frac{M^2N_0}{2\rho}(1+\eps_{19})\frac{r_n^2}{r_{n+1}^2}.$$ In particular, the upper bound for $N(Q)$ only depends on $n$, but not on the particular location of $Q$ in the plane.
Repeating this argument, we get that $((1+\eps_{19})(M^2N_0)/(2\rho))^k(r_n/r_{n+k})^2$ elements of $\Q_{n+k}$ suffice to cover $\mathcal{J}(F)\cap Q$, for every $k\in\mathbb{N}$. This yields for fixed $\lambda\in(0,1/e)$ that
\begin{align*}
\mathcal{H}^{h_{\lambda,\gamma}}(Q\cap\mathcal{J}(F))&\leq\lim_{k\to\infty}\left((1+\eps_{19})\frac{M^2N_0}{2\rho}\right)^k\frac{r_n^2}{r_{n+k}^2}h_{\lambda,\gamma}(\sqrt{2}r_{n+k})\\
&=r_n^2\lim_{k\to\infty}\left((1+\eps_{19})\frac{M^2N_0}{2\rho}\right)^k\frac{1}{r_{n+k}^2}2r_{n+k}^2\Phi_\lambda(1/(\sqrt{2}r_{n+k}))^\gamma\\
&\leq 2r_n^2\lim_{k\to\infty}\left((1+\eps_{19})\frac{M^2N_0}{2\rho}\right)^k\Phi_\lambda(g_1^{n+k}(1/r_0)/\sqrt{2})^\gamma
\end{align*}
by \eqref{99}, where $g_1(x)=\rho M\exp(\rho cKx/M)$. Choose $x_1>x$ with $g_1^k(x)<E_\lambda^k(x_1)$ for all $k$. We obtain
\begin{align*}
\mathcal{H}^{h_{\lambda,\gamma}}(Q\cap\mathcal{J}(F))&\leq2r_n^2\lim_{k\to\infty}\left((1+\eps_{19})\frac{M^2N_0}{2\rho}\right)^k\Phi_\lambda(E_\lambda^{n+k}(x_1))^\gamma\\
&\leq2r_n^2\Phi_\lambda(x_1)^\gamma\beta_\lambda^{n\gamma}\lim_{k\to\infty}\left((1+\eps_{19})\frac{M^2N_0}{2\rho}\right)^k(\beta_\lambda^\gamma)^k.
\end{align*}
The above limit is zero if $$\beta_\lambda^\gamma<\frac{2\rho}{(1+\eps_{19})M^2N_0},$$ that is, $$\gamma<\frac{\log(2\rho)-\log(M^2N_0)-\log(1+\eps_{19})}{\log\beta_\lambda},$$ and this is completely independent of $Q$.
Since we can clearly cover $\mathcal{J}(F)$ by countably many squares $Q=Q(w,r_n,0)$ that satisfy $n\geq\max_{z\in\mathcal{J}(F)\cap Q}n(z)$, Theorem \ref{main2} now follows with the choices $K_2:=2\log M+\log N_0-\log 2+\log(1+\eps_{19})$ and $\rho_0>\exp(K_2)$.


\end{document}